\documentclass[reqno]{amsart}
\usepackage{amssymb,amsmath,hyperref}
\usepackage{amsrefs, float}
\usepackage[foot]{amsaddr}
\usepackage{bbold,stackrel, multicol}
 
\newcommand{\Z}{{\textsf{\textup{Z}}}}

\newtheorem{thm}{Theorem}

\newtheorem{cor}[thm]{Corollary}
\newtheorem{defi}[thm]{Definition}

\newtheorem{rem}[thm]{Remark}
\newtheorem{nota}[thm]{Notation}

\newtheorem{princ}[thm]{Principle}

\newtheorem{ack}[thm]{Acknowledgement}

\newtheorem*{tempo*}{Template}

\newtheorem{remark}[thm]{Remark}

\newcommand\be{\begin{equation}}
\newcommand\ee{\end{equation}} 

\usepackage{amsmath,amsfonts} 

\usepackage[applemac]{inputenc}

\def\bdefi{\begin{defi}\rm}
\def\edefi{\end{defi}}
\def\bnota{\begin{nota}\rm}
\def\enota{\end{nota}}

\def\FIVE{\Pi_{1}^{1}\text{-\textup{\textsf{CA}}}_{0}}

\def\SIXko{\Pi_{k+1}^{1}\text{-\textsf{\textup{CA}}}_{0}}
\def\SIXK{\Pi_{k}^{1}\text{-\textsf{\textup{CA}}}_{0}^{\omega}}

\def\ATR{\textup{\textsf{ATR}}}

\def\ZF{\textup{\textsf{ZF}}}

\def\L{\textsf{\textup{L}}}

\def\RCA{\textup{\textsf{RCA}}}
\def\({\textup{(}}
\def\){\textup{)}}

\def\RCAo{\textup{\textsf{RCA}}_{0}^{\omega}}
\def\ACAo{\textup{\textsf{ACA}}_{0}^{\omega}}

\def\WKL{\textup{\textsf{WKL}}}

\def\N{{\mathbb  N}}
\def\Q{{\mathbb  Q}}
\def\R{{\mathbb  R}}

\def\SS{\textup{\textsf{S}}}

\def\di{\rightarrow}

\def\asa{\leftrightarrow}

\def\ACA{\textup{\textsf{ACA}}}

\def\QFAC{\textup{\textsf{QF-AC}}}

\def\ran{\textup{\textsf{ran}}}

\def\SIND{\Sigma\textup{\textsf{-IND}}}

\def\cocode{\textup{\textsf{cocode}}}

\def\NIN{\textup{\textsf{NIN}}}

\def\open{\textup{\textsf{open}}}
\def\enum{\textup{\textsf{enum}}}

\def\CBT{\textup{\textsf{CBT}}}

\def\BOOT{\textup{\textsf{BOOT}}}

\def\RANGE{\textup{\textsf{RANGE}}}
\def\range{\textup{\textsf{range}}}

\def\PST{\textup{\textsf{PST}}}
\def\eps{\varepsilon}

\def\CC{\textup{\textsf{CC}}}

\def\INT{\textup{\textsf{int}}}

\usepackage{mathrsfs}  

\usepackage{graphicx}
\usepackage{tikz}
\usetikzlibrary{matrix, shapes.misc}
\usepackage{comment,tikz-cd}

\numberwithin{equation}{section}
\numberwithin{thm}{section}

\begin{document}
\title{Coding is non-robust}
\author{Sam Sanders}
\address{Department of Philosophy II, RUB Bochum, Germany}
\email{sasander@me.com}

\keywords{Reverse Mathematics, robustness, representation of open sets, topological boundary}
\subjclass[2020]{Primary: 03B30, 03F35}

\begin{abstract}
For various reasons, higher-order objects are often studied in mathematical logic via second-order `codes' or `representations'.  
An important example hailing from real analysis and topology is provided by open sets, which are represented by unions $\cup_{n\in \N}I_{n}$ where each $I_{n}$ is a basic open interval.  
Now, Montalb\'an has recently highlighted the importance of robustness of logical systems in Reverse Mathematics (abbreviated RM; see \cite{simpson2,montahue}).  It is then a natural RM-question whether basic properties of open sets are robust under slight modifications of the coding of open sets.  Here, we study the representation where $I_{n}$ as above is \emph{either} an open interval \emph{or} the union of two such intervals \emph{but} we cannot decide which one.  
Under this slight variation of the usual coding, basic properties of open and closed sets readily imply the relatively strong system $\ATR_{0}$ from RM.  
Moreover, we obtain equivalences for the former properties and the \emph{enumeration principle}.  The latter states that countable sets can be enumerated and boasts many equivalences from Fourier analysis.  
Along the way, we investigate the RM-properties of the closure, interior, and boundary of sets of reals, a study interesting in its own right.  
\end{abstract}
\setcounter{page}{0}
\tableofcontents
\thispagestyle{empty}
\newpage
\maketitle              

\section{Introduction}
\subsection{Aim and motivation}\label{intro}
In a nutshell, we introduce a slight variation of the usual representation of open sets in mathematical logic and show that basic properties of such open sets readily imply the relatively strong system $\ATR_{0}$.  To this end, we investigate the logical properties of the topological closure, interior, and boundary, a study which turns out to be of independent interest. 

\smallskip

In more detail, the notion of open (and closed) set is uniquely central to mathematics, especially topology and analysis.
Historically, this concept dates back to Baire's doctoral thesis (\cite{beren2}) while Dedekind already studied such notions twenty years earlier; the associated paper \cite{didicol} was only published much later (\cite{moorethanudeserve}).
Now, for various reasons, the study of open sets in mathematical logic usually takes place indirectly, namely via \emph{representations} or \emph{codes} for open sets, as discussed in detail in Remark \ref{XzX}.
In this light, it is a natural question whether this representation is \emph{robust} or whether slight variations of this representation yield different results.  The importance of robustness in Reverse Mathematics (abbreviated RM, see \cite{simpson2, stillebron, damurm, samBOOK} for an introduction) has been qualified as follows.  
 \begin{quote}
[\dots] one distinction that I think is worth making is the one between robust systems and non-robust systems. 
A system is \emph{robust} if it is equivalent to small perturbations of itself. (\cite{montahue}*{p.\ 432})
\end{quote}
In this paper, we study the robustness of the well-known representation of open sets of reals as countable unions of open intervals.  
We introduce the $D$-representation (Definition \ref{charkar}) where an open set is given as a union $\cup_{n\in \N}J_{n}$ where each $J_{n}$ is \emph{either} an open interval \emph{or} the union of two open intervals \emph{but} we cannot tell which one always.  
We show that basic properties of $D$-open sets readily imply $\ATR_{0}$.  

\smallskip

To this end, we obtain equivalences involving properties of $D$-open sets and the \emph{enumeration principle} from \cite{dagsamXI, samBIG} in Kohlenbach's higher-order RM, which is sketched in Section~\ref{ohoam}.  In turn, this study requires RM-results concerning the topological boundary, interior, and closure of sets, interesting in their own right.  A brief overview of our results is as follows. 
\begin{itemize} 
\item We study the RM-properties of the existence of the topological boundary, closure, and interior of sets of reals as follows in Section \ref{setbou}. 
\begin{itemize}
\item For closed and open sets, the boundary, closure, and interior exist as sets assuming arithmetical comprehension (Theorem \ref{talkingaboud}).  
\item For $\bf F_{\sigma}$-sets, the existence of the boundary is not provable in rather strong systems (Theorem \ref{justnin}) and implies $\ATR_{0}$ (Theorem \ref{moare}).  
\item The existence of the interior for general sets of reals is equivalent to Feferman's projection principle (Theorem \ref{tenar}), one of the strongest axioms studied in higher-order RM. 
\end{itemize}
\item Basic properties of $D$-open sets are equivalent to the \emph{enumeration principle} (Theorem \ref{trefor}), which states that countable sets can be enumerated and implies $\ATR_{0}$ (\cite{dagsamX}).  
Examples include the following theorems. 
\begin{itemize}
\item Urysohn's lemma for disjoint $D$-closed sets of reals. 
\item The supremum and maximum principle for continuous (and other classes of) functions on $D$-closed sets in $[0,1]$. 
\item A $D$-open set of reals has a second-order code.  
\item The perfect set theorem for the $D$-representation: a non-enumerable $D$-closed set contains a $D$-closed and perfect set.  
\end{itemize}
\item We discuss related results and the associated foundational implications.
\begin{itemize}
\item Replacing `enumerable' by `injection to $\N$' in the perfect set theorem and Cantor-Bendixson theorems results in principles that do not imply $\ATR_{0}$. 
This follows from our answer to a question of Mummert posed during the 2022 RM-meeting in Paris (Section \ref{mummershines}). 
\item We draw a parallel between the recent history of analysis and the coding of open sets in second-order RM (Section \ref{fasp1}).  
\item We discuss the non-robustness of representing open sets in the grand scheme of things (Section \ref{fasp2}).  
\end{itemize}
\end{itemize}
Finally, open sets are often studied indirectly via `codes' or `representations' in mathematical logic. 
The following historical remark provides an overview of some such approaches to open sets.
\begin{rem}[Open sets and representations]\label{XzX}\rm
A set is \emph{open} if it contains a neighbourhood around each of its points, and an open set can be represented as a \emph{countable} union of such neighbourhoods in separable spaces, a result going back to Cantor.    
The latter and similar representations are used in various `computational' approaches to mathematics, as follows. 

\smallskip

For instance, the neighbourhood around a point of an open set is often assumed to be given together with this point (see e.g.\ \cite{bish1}*{p.\ 69}).  This is captured by the R2-representation studied in \cite{dagsamVII}*{\S7} and below.
Alternatively, open sets are simply represented as countable unions -called `codes' in \cite{simpson2}*{II.5.6}, `names' in \cite{wierook}*{\S1.3.4}, `witnesses' in \cite{weverketoch}, and `presentations' in \cite{littlefef}- of basic open neighbourhoods.
A related notion is \emph{locatedness} which means the (continuous) distance function $d(a, A):=\inf_{b\in A}d(a, b)$ exists for the set $A\subset \R$ (see \cite{bish1}*{p.\ 82}, \cite{withgusto}, or \cite{twiertrots}*{p.\ 258}), and numerous sufficient conditions are known (\cite{withgusto}).    

\smallskip

The representation of open sets as countable unions has the advantage that one can (effectively) switch between codes for open sets and codes for continuous functions (see e.g.\ \cite{simpson2}*{II.7.1}).  
As shown in \cite{dagsamVII, kohlenbach4, dagsamXIV}, the base theory plus $\WKL_{0}$ establishes that a continuous function has a code on Cantor space or the unit interval.  
By contrast, the base theory plus full second-order arithmetic cannot prove that an open set has a code (\cite{dagsamVII}). 
\end{rem}

\subsection{On higher-order arithmetic}\label{ohoam}
We assume basic familiarity with Kohlenbach's higher-order RM, the base theory $\RCAo$ in particular. 
The original text is \cite{kohlenbach2} while more recent introductions are in \cites{dagsamXIV, sammetric}. 
A monograph on this topic is forthcoming (\cite{samBOOK}) while a brief sketch is provided next.

\smallskip

Zeroth of all, higher-order RM is officially formulated using types rather than sets.  
Hence, both `$n\in \N$' and `$n^{0}$' are used for `$n$ is a natural number'.  
Similarly, elements of Baire space are identified as `$f^{1}$' or `$f\in \N^{\N}$', while mappings from Baire space to Baire space are denoted $\Phi^{1\di 1}$ or $\Phi:\N^{\N}\di \N^{\N}$.  
We will not really use objects of higher rank in this paper and the use of types is kept to a minimum.  We do introduce (standard) sequence notation in the below (Notation \ref{taxio}).

\smallskip

First of all, real numbers and real equality `$=_{\R}$' are defined in $\RCAo$ in the same way as in second-order RM, i.e.\ as fast-converging Cauchy sequences.  
A function from reals to reals, denoted $F:\R\di \R$, is then given by $\Phi:\N^{\N}\di \N^{\N}$ such that 
\[
(\forall x, y\in \R)(x=_{\R}y\di \Phi(x)=_{\R} \Phi(y)).
\]
Secondly, we consider the following essential axiom where the functional $E$ is also called \emph{Kleene's quantifier $\exists^{2}$} and is discontinuous on Baire space:
\be\tag{$\exists^{2}$}
(\exists E:\N^{\N}\di \{0,1\})(\forall f\in\N^{\N})( E(f)=0 \asa (\exists n\in \N)(f(n)=0)).
\ee
We write $\ACAo\equiv \RCAo+(\exists^{2})$ and observe that the latter proves the same second-order sentences as $\ACA_{0}$ (see \cite{hunterphd}).
We shall mostly work in $\ACAo$, which is convenient as set of reals can then given via characteristic functions.  

\smallskip

The following definition of sets aims at maximal compatibility with the second-order framework following \cite{simpson2}*{II.7.1}.  
\bdefi[Sets]\label{char}~
\begin{enumerate}
\renewcommand{\theenumi}{\alph{enumi}}
\item A set of reals $A$ and its complement $A^{c}$ are given by a representation function $F_{A}:\R\di \R^{+}\cup\{0\}$; we write `$x\in A$' for $ F_{A}(x)>_{\R}0$ and `$x \in A^{c}$' if $F_{A}(x)=_{\R}0$.\label{tinkzzz}
\item We write `$A\subseteq B$' if we have $(\forall x\in \R)(x\in A\di x\in B)$.  
\item A set $O\subseteq \R$ is \emph{open} in case $x\in O$ implies that there is $k\in \N$ such that $B(x, \frac{1}{2^{k}})\subseteq O$.\label{qzopen}  
\item The complement $O^{c}$ of an open set $O\subset \R$ is called closed.  
\item A set $O\subseteq \R$ is \emph{RM-open} if there are sequences $(a_{n})_{n\in \N}, (b_{n})_{n\in \N}$ of reals such that $x\in O$ if and only if $x\in \cup_{n\in \N}(a_{n}, b_{n})$ for all $x\in \R$ (\cite{simpson2}*{II.5.6}).\label{daz}
\item The complement $O^{c}$ of an RM-open set $O\subset \R$ is called RM-closed.  
\item A set $A\subset \R$ has \emph{measure zero} if for any $\eps>0$ there is a sequence of open intervals $(I_{n})_{n\in \N}$ such that $\cup_{n\in \N}I_{n}$ covers $A$ and $\eps>\sum_{n=0}^{\infty}|I_{n}|$. 
\item A set $A\subset \R$ is \emph{enumerable} if there is a sequence $(x_{n})_{n\in \N}$ that includes all elements of $A$. 
\item A set $A\subset \R$ is \emph{countable} if there is $Y:\R\di \N$ satisfying
\[
(\forall x, y\in A)(Y(x)=_{\N}Y(y)\di x=_{\R}y),
\]
where $Y$ is called `injective on $A$'.  
\end{enumerate}
\edefi
With this convention in place, we can adopt the usual definitions of nowhere dense set, meagre set, $\bf{F}_{\sigma}$-set, the Baire property, et cetera from the literature. 
We do feel the need to recall some standard sequence notation from type theory.
\begin{nota}\label{taxio}\rm
Finite sequences of naturals are denoted $\sigma\in \N^{<\N}$ or $\sigma^{0^{*}}$ with the empty sequence being `$\langle \rangle$'.  
We do not always distinguish between a natural number $n$ and the associated one-element sequence $\langle n\rangle$. 
For $\sigma^{0^{*}}$ and $\tau^{0^{*}}$, the `concatenation of $\sigma$ followed by $\tau$' is denoted $\sigma*\tau$. 
For $\sigma^{0^{*}}=\langle n_{0}, \dots n_{k}\rangle$, the `length $|\sigma|$' is $k+1$ while $\overline{\sigma}m$ for $m<|\sigma|$ is $\sigma$ `cut off' after the first $m+1$ elements.  
We assume a standard pairing function is given, which codes finite sequences as natural numbers.  
We use similar notations for infinite sequences of naturals with their obvious meaning. 
\end{nota}
Thirdly, following Notation \ref{taxio}, consider the following axiom where the functional $\SS^{2}$ is often called \emph{the Suslin functional} (\cite{kohlenbach2, avi2, yamayamaharehare}):
\be\tag{$\SS^{2}$}
(\exists\SS:\N^{\N}\di \{0,1\})(\forall f \in \N^{\N})\big[  (\exists g \in \N^{\N})(\forall n \in \N)(f(\overline{g}n)=0)\asa \SS(f)=0  \big].
\ee
By definition, the Suslin functional $\SS^{2}$ can decide whether a $\Sigma_{1}^{1}$-formula in normal form, i.e.\ as in the left-hand side of $(\SS^{2})$, is true or false.   
The system $\FIVE^{\omega}\equiv \RCAo+(\SS^{2})$ proves the same second-order sentences as $\FIVE$ via a straightforward variation of the conservation results from \cite{hunterphd}.  

\smallskip

We also consider the functional $\SS_{k}^{2}$ which decides the truth or falsity of $\Sigma_{k}^{1}$-formulas in normal form; we define 
the system $\SIXK$ as $\RCAo+(\SS_{k}^{2})$, where  $(\SS_{k}^{2})$ expresses that $\SS_{k}^{2}$ exists.  
We define $\Z_{2}^{\omega}$ as $\cup_{k}\SIXK$ as one possible higher-order version of $\Z_{2}$.
The functionals $\nu_{n}$ from \cite{boekskeopendoen}*{p.\ 129} are essentially $\SS_{n}^{2}$ strengthened to return a witness (if existent) to the $\Sigma_{n}^{1}$-formula at hand.  
The operator $\nu_{n}$ is Hilbert-Bernays' $\nu$ from \cite{hillebilly2}*{p.\ 479} restricted to $\Sigma_{n}^{1}$-formulas. 

\smallskip

\noindent
Fourth, we introduce Kleene's quantifier $\exists^{3}$ as follows:
\be\tag{$\exists^{3}$}
(\exists E: (\N^{\N}\di \N)\di \N)(\forall Y:\N^{\N}\di \N)\big[  (\exists f \in \N^{\N})(Y(f)=0)\asa E(Y)=0  \big].
\ee
Both $\Z_{2}^{\Omega}\equiv \RCAo+(\exists^{3})$ and $\Z_{2}^\omega\equiv \cup_{k}\SIXK$ are conservative over $\Z_{2}$ as shown in \cite{hunterphd}.
The functional from $(\exists^{3})$ is also called `Kleene's quantifier $\exists^{3}$', and we use the same convention for other functionals.  Hilbert-Bernays' operator $\nu$ from \cite{hillebilly2}*{p.\ 479} is essentially Kleene's $\exists^{3}$, modulo a non-trivial fragment of the Axiom of Choice.    

\smallskip

Fifth, like its second-order counterpart, the development of higher-order RM sometimes takes place over an \emph{extension} of the base theory.  
The following fragment of countable choice, not provable\footnote{One readily proves that $\QFAC^{0,1}$ is equivalent to $\CC(\R)$, i.e.\ countable choice for countable unions of sets of reals, over $\ZF$ (\cite{heerlijkheid}).} in $\ZF$, plays an important role.
\begin{princ}[$\QFAC^{0,1}$]
Let $\varphi$ be quantifier-free with $(\forall n\in \N)(\exists f\in \N^{\N})\varphi(f, n)$, then there exists a sequence $  (f_{n})_{n\in \N}$ in $\N^{\N}$ with $(\forall n\in \N)\varphi(f_{n}, n)$.
\end{princ}
The local equivalence between sequential and `epsilon-delta' continuity cannot be proved in $\ZF$ (\cite{heerlijkheid}), but can be established in $\RCAo+\QFAC^{0,1}$, as first shown by Kohlenbach in \cite{kohlenbach2}.  
In this light, it should not be a surprise that $\RCAo+\QFAC^{0,1}$ often occurs as a base theory.  Similarly, extra induction axioms are sometimes used in second-order RM (\cite{neeman}) and we will often use the following fragment of induction, where formulas of the same form as $\varphi(n)$ are called\footnote{The usual formula hierarchy based on the classes $\Sigma_{k}^{i}$ for $i=0, 1$ only allows for first- and second-order parameters.  
As a generalisation of $\Sigma_{1}^{1}$ to higher-order parameters, we let a \emph{$\Sigma$-formula} be any formula of the form $(\exists f\in 2^{\N})(Y(f, n)=0)$ for third-order $Y$. } a \emph{$\Sigma$-formula}; the negation of the latter is called a \emph{$\Pi$-formula}. 
\begin{princ}[$\SIND$]  
The induction axiom for formulas of the form $\varphi(n)\equiv (\exists f\in \N^{\N})(Y(f, n)=0)$ for any $Y^{2}$.   
\end{princ}
Finally, it is an empirical observation that, on one hand, many third-order theorems about (possibly) discontinuous functions are equivalent to the Big Five of RM (\cite{dagsamXIV}), 
while on the other hand many third-order theorems are provable in $\Z_{2}^{\Omega}$ but not in $\Z_{2}^{\omega}+\QFAC^{0,1}$.  
The weakest `natural' theorem not provable in $\Z_{2}^{\omega}+\QFAC^{0,1}$ is well-known as the \emph{uncountability of the reals}, formulated as follows:
\begin{center} 
$\NIN_{[0,1]}$: there is no injection from $[0,1]$ to $\N$.  
\end{center}
A long list of theorems that imply $\NIN_{[0,1]}$ can be found in \cites{dagsamX, samBOOK, samBIG} while a stronger negative result is proved in \cite{dagsamXVII}. 

\section{Setting boundaries}\label{setbou}
\subsection{Introduction}
We study the RM-properties of the existence of the boundary, closure, and interior of sets of reals. 
In particular, we show that these constructs exist for open and closed sets using only arithmetical comprehension 
while the generalisation to $\bf F_{\sigma}$-sets already implies $\ATR_{0}$.  The general case yields Feferman's projection principle and Kleene's quantifier $(\exists^{3})$, which are the strongest axioms studied in higher-order RM. 
We shall discuss foundational implications in Section~\ref{fasp1} but do feel the need to mention the following quote \emph{hic et nunc}.  
\begin{quote}
Most mathematics naturally lies within the realm of complete separable metric
spaces and continuous functions between them defined on open, closed, compact or
$G_{\delta}$ subsets. (\cite{fried5})
\end{quote}
First of all, concepts like boundary are well-known from topology and defined as follows based on Definition \ref{char}.
\begin{defi} Let $E\subset \R$ be an arbitrary set.  
\begin{itemize} 
\item A real $x\in \R$ is a \emph{limit point} of $E$ if $(\forall k\in \N)(\exists y\in B(x, \frac{1}{2^{k}}))(y\ne x\wedge y\in E)$. 
\item The \emph{closure} $\overline{E}$ is the set $E$ together will all its limit points.
\item The \emph{interior} $\INT(E)$ is $\{x\in E: (\exists N\in \N)( B(x, \frac{1}{2^{N}})\subset E)\} $.  
\item The \emph{boundary} $\partial E$ is the difference $\overline{E}\setminus \INT(E)$.
\end{itemize}
\end{defi}
As the associated definitions quantify over the underlying space, these constructs do not exist in general as sets, even in fairly strong logical systems.   
In particular, the general existence of the interior is equivalent to \emph{Feferman's projection principle} by Theorem \ref{tenar}.   
Regarding the latter, the late Sol Feferman was a leading Stanford logician with a life-long interest in foundational matters (\cite{fefermanlight, fefermanmain}), esp.\ so-called predicativist mathematics following Russell and Weyl (\cite{weyldas}). 
As part of this foundational research, Feferman introduced the `projection principle' \textsf{Proj}$_{1}$ in \cite{littlefef}, a third-order version of $(\exists^{3})$.
We study the principle $\BOOT$, which is Feferman's principle \textsf{Proj}$_{1}$ in $\L_{\omega}$.
Kohlenbach's principle $\Pi_{1}^{1,b}$-$\textsf{CA}_{0}$ from \cite{kohlenbach4}*{\S5} is readily seen to be equivalent to $\BOOT$ over $\ACAo$. 
\begin{princ}[$\BOOT$] For $Y^{2}$, there is $X\subset \N$ such that 
\be\label{EZ}
(\forall n\in \N)\big[n\in X\asa (\exists f\in \N^{\N})(Y(f, n)=0)  \big].
\ee
\end{princ}
The name of this principle derives from the verb `to bootstrap'.  Indeed, $\BOOT$ is third-order and weaker than $(\exists^{3})$, but we still have that $\SIXK+\BOOT$ proves $\SIXko$, which is readily proved.

\subsection{Some equivalences}\label{pluffy}
We show that for closed and open sets, the boundary, closure, and interior exist (as sets) in $\ACAo$ (Theorem \ref{talkingaboud}).  
By contrast, for $\bf F_{\sigma}$-sets, the existence of the boundary is not provable in $\Z_{2}^{\omega}+\QFAC^{0,1}$ (Theorem \ref{justnin}), and even implies $\ATR_{0}$ assuming $\ACAo$ (Theorem \ref{moare}).  
The general existence of the interior is equivalent to Feferman's projection principle $\BOOT$ (Theorem \ref{tenar}).  
The existence of the \emph{interior operator}, mapping a set to its interior, is equivalent to Kleene's quantifier $(\exists^{3})$ over $\ACAo$ (Corollary \ref{fefequo}). 

\smallskip

First of all, to our own surprise, the boundary, closure, and interior of open and closed sets are readily defined, despite the definition involving a quantifier over $\R$. 
\begin{thm}[$\RCAo$]\label{talkingaboud}
The following are equivalent to $(\exists^{2})$. 
\begin{enumerate}
\renewcommand{\theenumi}{\alph{enumi}}
\item For any closed $C\subset \R$, the boundary $\partial{C}$ exists.\label{plifn1}
\item For any open $O\subset \R$, the boundary $\partial{O}$ exists. \label{plifn4}
\item There exists $\mathfrak{B}:(\R\di \R)\di \R$ such that for any closed $C\subset \R$, $\mathfrak{B}(C)$ equals the boundary $\partial C$. \label{plifn5}
\end{enumerate}
The following are provable from $(\exists^{2})$. 
\begin{enumerate}
\renewcommand{\theenumi}{\alph{enumi}}
\setcounter{enumi}{3}
\item For any open $O\subset \R$, the closure $\overline{O}$ exists and is RM-closed.\label{plifn2}
\item For any closed $C\subset \R$, the interior $\textsf{\textup{int}}(C)$ exists and is RM-open. \label{plifn3}
\item For any open $O\subset \R$, the distance function $d(x, O)=\inf_{y\in O}|x-y|$ exists.\label{treffoil}
\end{enumerate}
\end{thm}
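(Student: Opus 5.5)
We prove the forward directions first, working in $\ACAo$, and then handle the reversals. The key observation is that membership in an open or closed set, together with openness itself, lets us replace quantifiers over $\R$ by quantifiers over $\Q$.

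Let $O$ be open, given by $F_O$. We claim
\[
x\in \overline{O}\asa (\forall k\in\N)(\exists q\in \Q)\big(|x-q|<\tfrac{1}{2^{k}}\wedge q\in O\big).
\]
For the forward direction, take $y\in O\cap B(x,\frac{1}{2^k})$. Since $O$ is open, some ball around $y$ lies inside $O$, and that ball contains a rational inside $B(x,\frac1{2^k})$. The reverse direction is immediate. Now $q\in O$ is the decidable condition $F_O(q)>_\R 0$, which is $\Sigma^0_1$ in $F_O$ and hence decidable via $\exists^2$. So $\exists^2$ decides $x\in\overline O$, and the closure exists as a set.

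To see that $\overline O$ is RM-closed, we enumerate (via $\exists^2$) all rational intervals $(a,b)$ with $(a,b)\cap\Q\cap O=\emptyset$. Their union is exactly $\overline{O}^{c}$: a point outside $\overline O$ has a rational neighbourhood missing $O\cap\Q$, and conversely such a neighbourhood misses all of $O$ by openness. Applying this to $O=C^c$ gives (\ref{plifn3}), since $\INT(C)=(\overline{C^c})^c$. For the boundary, we have $\partial O=\overline O\setminus O$ and $\partial C=C\setminus\INT(C)$, both decidable by $\exists^2$ uniformly in the representation function. This yields (\ref{plifn1}), (\ref{plifn4}) and the functional $\mathfrak B$ in (\ref{plifn5}), defined by an explicit term in $E$ and $F_C$. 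For (\ref{treffoil}), set $d(x,O)=\inf\{|x-q|: q\in\Q\cap O\}$, using density of $\Q\cap O$ in $O$ (again by openness). This infimum of a sequence of reals exists in $\ACAo$, e.g.\ by computing each binary approximation with $\exists^2$. If $O=\emptyset$ we adopt a convention, which $\exists^2$ can detect.

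For the reversals, (\ref{plifn5}) implies (\ref{plifn1}) trivially, so we need that (\ref{plifn1}) and (\ref{plifn4}) each imply $(\exists^2)$. We argue in the usual way: by excluded middle either $(\exists^2)$ holds, or all functions $\R\to\R$ are continuous, and in the latter case we derive a contradiction. Alternatively, given $f\in\N^\N$, we build a set whose boundary encodes whether $(\exists n)f(n)=0$. Take the closed set $C_f=\{0\}$ if $(\forall n)f(n)\neq0$, and $C_f=[-2^{-n_0},2^{-n_0}]$ for the least $n_0$ with $f(n_0)=0$ otherwise. Its representation function is defined without $\exists^2$: at $x$, search for $n\le$ a modulus determined by $x$, taking care that the definition is extensional and computable. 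Then $0\in\partial C_f$ iff $(\forall n)f(n)\neq0$, so the set $\partial C_f$ decides the $\Sigma^0_1$ statement.

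Getting $\exists^2$ as a functional from this pointwise construction requires uniformity in $f$. The cleanest route is the dichotomy: under $\neg(\exists^2)$ all functions on $\R$ are continuous, so the existence of $\partial C_f$ as a continuous characteristic-type function for every $f$ contradicts the discontinuity at $0$. We take $f$ to be a sequence converging to the all-nonzero function and use a continuity argument on the parameter. For (\ref{plifn4}) we use the complement $O_f=C_f^c$, whose boundary is $\{0\}$ or $\{\pm2^{-n_0}\}$.

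The main obstacle is exactly this reversal: defining $F_{C_f}$ in $\RCAo$ without $\exists^2$, and arranging the contradiction with continuity. The continuity-based dichotomy is the approach we would try first.
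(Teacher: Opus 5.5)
Your forward directions are correct and essentially the paper's argument. Openness lets you replace the quantifier over $\R$ by one over $\Q$, so membership in $\overline{O}$ and in $\INT(C)$ becomes arithmetical and hence decidable by $(\exists^{2})$, with the rational intervals giving the RM-code. The paper starts from $\INT(C)$ and you start from $\overline{O}$ and take complements, which is immaterial.

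The gap is the reversal, which you never complete: you call it the main obstacle and propose a family $C_f$ plus a ``continuity argument on the parameter''. That argument is not available as stated. Item (a) gives you, for each $f$ separately, \emph{some} representation of $\partial C_f$, but no functional $f\mapsto \partial C_f$ whose continuity in $f$ you could exploit. Only item (c) is uniform, and extracting such a functional from (a) would need choice not present in $\RCAo$. More importantly, the parameter is unnecessary. The missing observation is that no uniformity in $f$ is needed: a \emph{single} discontinuous $F:\R\di\R$ already yields $(\exists^{2})$ by Kohlenbach's Prop.\ 3.14. This is the same fact behind the dichotomy you mention, since under $\neg(\exists^{2})$ every such $F$ is continuous.

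So it suffices to exhibit one fixed closed (resp.\ open) set, presented by a continuous representation function, whose boundary is non-empty with empty interior. The paper takes $[0,1]$ and $(0,1)$, whose boundary is $\{0,1\}$. Your own $C_f$ for an $f$ without zeros works equally well: this is the closed set $\{0\}$, presented via the continuous function $|x|$ representing its complement. Then $\partial\{0\}=\{0\}$, and any $G$ with $G(x)>_{\R}0\asa x=_{\R}0$ is discontinuous at $0$. For (b) use $\R\setminus\{0\}$ or $(0,1)$ (the latter with representation $\max(0,\min(x,1-x))$), and (c) trivially implies (a). Your worry about defining $F_{C_f}$ without $(\exists^{2})$ then disappears, since a fixed set needs no uniform definition. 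Note also that a function positive \emph{exactly} on $\{0\}$ cannot exist without $(\exists^{2})$. Closed sets like $\{0\}$ or $[0,1]$ must therefore be presented through the continuous representation function of their open complement, which is also what the paper implicitly does for $[0,1]$.
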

\begin{proof}
To obtain $(\exists^{2})$ from item \eqref{plifn1}-\eqref{plifn5}, observe that the boundary of $[0,1]$ and $(0,1)$ is $\{0,1\}$; these intervals (trivially) have continuous representation functions (\cite{simpson2}*{II.7.1}), but $\{0,1\}$ clearly (only) has a discontinuous representation function.  
The existence of a discontinuous function implies $(\exists^{2})$ by \cite{kohlenbach2}*{Prop.\ 3.14}.  

\smallskip

To prove items \eqref{plifn1}-\eqref{plifn3} from $(\exists^{2})$, let $C\subset [0,1]$ be closed and observe that 
for all $a, b\in \R$, we have 
\[
(a, b)\subset C \asa \big[ [(a, b)\cap \Q]\subset C  \big], 
\]
as the complement of $C$ is open.  In this light, `$x\in \textsf{int}(C)$' is equivalent to  
\be\label{zefrepo}\textstyle
x\in C\wedge (\exists N\in \N)(\forall q\in B(x, \frac{1}{2^{N}})\cap \Q)(q\in C)  ),
\ee
which is arithmetical and hence decidable using $(\exists^{2})$.  Thus, $\textsf{int}(C)$ exists as a set and the boundary $\partial C$ is simply the set $C\setminus \textsf{int}(C)$.  
An RM-code of $\textsf{int}(C)$ is $\cup_{r\in \textsf{int}(C)\cap \Q}B(r, R(r))$ where $R(x)$ is the least $N$ as in the second conjunct of \eqref{zefrepo}.  
A similar argument works for open sets and their closures.  To obtain item~\eqref{treffoil}, observe that $d(x, O)= \inf_{y\in O\cap \Q} |x-y|$ for open $O\subset \R$, where $(\exists^{2})$ readily provides the latter infimum. 
\end{proof}
Secondly, going slightly beyond open and closed sets, the boundary is already non-trivial as $\Z_{2}^{\omega}+\QFAC^{0,1}$ cannot prove $\NIN_{[0,1]}$. 
\begin{thm}[$\ACAo+\QFAC^{0,1}$]\label{justnin}
The principle $\NIN_{[0,1]}$ follows from the following closure properties.
\begin{enumerate}
\renewcommand{\theenumi}{\alph{enumi}}
\item For any set $X\subset \R$ that is $\bf F_{\sigma}$, the closure $\overline{X}$ exists.\label{salmu}
\item For any set $X\subset \R$ that is $\bf G_{\delta}$, the interior $\textsf{\textup{int}}{(X)}$ exists.
\end{enumerate}
\end{thm}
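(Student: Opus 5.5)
We argue by contradiction: assume $\NIN_{[0,1]}$ fails, so there is an injection $Y:[0,1]\di \N$. From $Y$ we build an $\bf F_{\sigma}$-set whose closure, if it exists, yields a contradiction. Item (b) will follow from item (a) by taking complements, since the complement of an $\bf F_{\sigma}$-set is $\bf G_{\delta}$ and $\INT(X^{c})=(\overline{X})^{c}$. We therefore concentrate on item~\eqref{salmu}.

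Given $Y$, define $X_{n}:=\{x\in[0,1]: Y(x)=n\}$. Each $X_{n}$ has at most one element, so it is closed, and its representation function is available via $(\exists^{2})$ applied to $Y$. Now take any $Z\subseteq\N$ and put $X_{Z}:=\cup_{n\in Z}X_{n}=\{x\in [0,1]: Y(x)\in Z\}$, which is $\bf F_{\sigma}$. The plan is to use $\overline{X}$ for suitable $\bf F_{\sigma}$-sets $X$ to locate the (unique) element of $X_{n}$ as a real, uniformly in $n$. This would give a sequence $(x_{n})$ enumerating $[0,1]$ up to the empty $X_{n}$, and the usual diagonal argument (the Cantor argument is available in $\RCAo$ for sequences) then produces $y\in[0,1]$ distinct from all $x_{n}$. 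Since $Y(y)=m$ forces $y\in X_{m}$ and hence $y=x_{m}$, this is a contradiction.

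The extraction works as follows. The closure of the one-point (or empty) set $X_{n}$ is $X_{n}$ itself, so closures of single $X_{n}$ give nothing new. The useful object is a set such as $X_{\geq n}:=\cup_{m\geq n}X_{m}$, or more simply the sets $Q_{n}:=\{x: Y(x)=n\}$ shifted by rationals. Instead of that, we use a cleaner device. The closure $\overline{X}$ is a set given by a characteristic function, and it is closed. By Theorem~\ref{talkingaboud} and $(\exists^{2})$, closed sets admit arithmetical decisions of whether $(a,b)\cap\overline{X}\neq\emptyset$ for rational $a,b$: this holds iff some rational point lies within every $2^{-k}$-neighbourhood, which becomes arithmetical once we know $d(\cdot,\overline{X})$; Theorem~\ref{talkingaboud} gives that for open sets, and the complement of $\overline{X}$ is open. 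For $X=X_{n}$ this is useless, as noted, so we take $X:=\{\langle x,n\rangle\}$ coded inside $\R$. Concretely, we place $X_{n}$ affinely into the interval $[2n,2n+1]$ and set $W:=\cup_{n}(2n+X_{n})$, which is $\bf F_{\sigma}$. Then $\overline{W}=W$ because the pieces are separated, but $\overline{W}$ is given as a closed set with a representation function, hence its complement is open. Applying Theorem~\ref{talkingaboud}\eqref{treffoil} to that complement gives the distance functions, and with $(\exists^{2})$ we locate the point of $W\cap[2n,2n+1]$ via interval bisection on rationals, uniformly in $n$. This yields $(x_{n})$, and $\QFAC^{0,1}$ can smooth out any choice needed.

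The main obstacle is that "$W$ exists as a set" already holds here without the closure principle, so the real content must be that the closure comes with enough \emph{structure} to test nonemptiness of $W\cap(a,b)$ arithmetically. Testing this directly requires a quantifier over $\R$, namely $(\exists x\in(a,b))(Y(x)=n)$. I expect the key insight to be that for a \emph{closed} set $C$ (here $\overline{W}$), nonemptiness of $C\cap[a,b]$ reduces to a question about the open complement, whose points are witnessed by rationals. If this reduction fails, the fallback is to choose an $\bf F_{\sigma}$-set whose closure genuinely differs from it. For instance, take $V:=\cup_{n}\{x+q_{k}2^{-n}: Y(x)=n\}$, so that membership of a rational in $\overline{V}$ encodes the existence of the point $x$ with $Y(x)=n$. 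One then reads off the sequence $(x_{n})$ from rational queries to $\overline{V}$.
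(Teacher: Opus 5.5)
Your reduction of (b) to (a) via $\INT(X^{c})=(\overline{X})^{c}$ is fine. So is the endgame, where an enumeration of $[0,1]$ contradicts Cantor's theorem. The gap is in the extraction step, which does not work as written.

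Your main device uses $W=\cup_{n}(2n+X_{n})$. But $W$ is already closed and already exists as a set in $\ACAo$, so $\overline{W}=W$ adds no information. Moreover, item (f) of Theorem~\ref{talkingaboud} gives the distance $d(x,O)$ to the \emph{open} set $O=\R\setminus W$. Since $W$ is nowhere dense, this is identically $0$; it is not $d(x,W)$. Rational queries cannot decide whether a closed set with empty interior meets $[a,b]$, because all rationals may lie in the complement. Distance functions for closed sets are a strong principle; compare item (c) of Theorem~\ref{trefor}. More decisively, if your bisection step worked, it would prove $\NIN_{[0,1]}$ in $\ACAo+\QFAC^{0,1}$ with no closure principle at all. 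That contradicts the fact that $\Z_{2}^{\omega}+\QFAC^{0,1}$ does not prove $\NIN_{[0,1]}$.

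Your fallback $V$ contains the right idea: fatten the unknown point by a dense set of rational translates, so that its closure contains a nondegenerate interval and hence rationals. However, it drops the separation between different $n$. All pieces lie in roughly $[0,2]$. Under $\neg\NIN_{[0,1]}$, every $x\in[0,1]$ has $Y(x)=n$ for some $n$, so $\overline{V}$ is a single interval containing $[0,1]$. No rational query to $\overline{V}$ can tell you whether a given $n$ lies in the range of $Y$.

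The missing step is to combine your two devices, which is the idea of the paper's proof. Put the rational translates of the possible point with $Y(x)=n$ into a region reserved for $n$. The paper shifts by $n+1$; spacing the regions further keeps the pieces for different $n$ disjoint. For example, take $B:=\{3n+q+x: n\in\N,\ q\in\Q\cap[0,1],\ x\in[0,1],\ Y(x)=n\}$.
\begin{itemize}
\item For fixed $n$ and $q$, the corresponding piece has at most one element by injectivity of $Y$. It is therefore closed, so $B$ is $\mathbf{F}_{\sigma}$.
\item The regions $[3n,3n+2]$ are at distance $1$ from each other. Hence $\overline{B}\cap[3n,3n+2]$ equals $[3n+x,3n+x+1]$ if $Y(x)=n$, and is empty if $n\notin\ran(Y)$.
\item Thus $n\in\ran(Y)\asa(\exists r\in\Q\cap[3n,3n+2])(r\in\overline{B})$. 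This is arithmetical once $\overline{B}$ is given, so $\ran(Y)$ exists.
\item Applying $\QFAC^{0,1}$ to $(\forall n\in\N)(\exists x\in[0,1])(n\in\ran(Y)\di Y(x)=n)$ yields an enumeration of $[0,1]$, a contradiction.
\end{itemize}
With this separation, your idea of reading off the points directly also works. The point $x$ with $Y(x)=n$ is the infimum of the rationals of $\overline{B}$ in $[3n,3n+2]$, minus $3n$, and this route even avoids $\QFAC^{0,1}$.
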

\begin{proof}
For the first item, let $Y:[0,1]\di \N$ be an injection.  
Now define the set $B\subset \R$ as follows: $x\in B$ if and only if
\be\label{examp}
 (\exists n\in \N, q\in \Q)\big(n+1<x-q<n+2 \wedge  Y(x-n-1-q)=n\big).
\ee
Let $(q_{n})_{n\in \N}$ be an enumeration of $\Q\cap [0,1]$ and observe that $B=\cup_{n,m\in \N}C_{n, m}$ with 
\[
C_{n, m}:=
 \{x \in \R^{+} : n+1<x-q_{m}<n+2\wedge  Y(x-n-1-q_{m})=n \}   
\]
being a closed set.  Indeed, since $Y:[0,1]\di \N$ is injective, $C_{n,m}$ is at most a singleton.   Now consider the following readily-proved equivalence:
\be\label{temupemu}
(\exists x\in [0,1])(Y(x)=n)\asa (\exists q\in (n, n+1)\cap \Q)( q\in \overline{B} ). 
\ee
Since the right-hand side of \eqref{temupemu} is arithmetical, we can define $\ran(Y)$, i.e.\ the range of $Y$ on the unit interval.  Now apply $\QFAC^{0,1}$ to 
\[
(\forall n\in \N)(\exists x\in [0,1])[n\in \ran(Y)\di Y(x)=n  ]
\]
to obtain an enumeration of the unit interval, contradicting \cite{simpson2}*{II.4.9}, which implies that the unit interval cannot be enumerated.  The second item follows in the same way.
\end{proof}
\noindent
For the following theorem, note that $\ACAo$ is conservative over $\ACA_{0}$ (see \cites{hunterphd, samBOOK}). 
\begin{thm}\label{moare}
The system $\ACAo$ plus item \eqref{salmu} from Theorem \ref{justnin} proves $\ATR_{0}$ and is a conservative extension of the latter. 
\end{thm}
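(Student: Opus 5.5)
We prove the two claims separately: the implication to $\ATR_{0}$, and the conservativity.

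For the implication, we work in $\ACAo$ plus item \eqref{salmu} of Theorem \ref{justnin}. We derive a second-order principle known to be equivalent to $\ATR_{0}$ over $\ACA_{0}$. The most convenient choice is $\Sigma_{1}^{1}$-separation \cite{simpson2}*{V.5.1}. A workable alternative is the statement that a sequence of trees, each with at most one path, has a set of those with a path, equivalently a set of indices $n$ such that $T_{n}$ is ill-founded.

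The strategy copies the proof of Theorem \ref{justnin}. Given a $\Sigma_{1}^{1}$-formula $(\exists f\in \N^{\N})\theta(f,n)$ with $\theta$ arithmetical, we use $(\exists^{2})$ to code, for each $n$ and each rational shift $q_{m}$, the set
\[
C_{n,m}:=\{x\in \R: n+1<x-q_{m}<n+2 \wedge x-n-1-q_{m} \text{ codes some } f \text{ with } \theta(f,n)\}.
\]
Here reals in $(0,1)$ are identified with elements of $\N^{\N}$ through the continued-fraction (or a similar) homeomorphism onto the irrationals, which is available in $\ACAo$. We need each $C_{n,m}$ to be closed in the sense of Definition \ref{char}, so that $B=\cup_{n,m}C_{n,m}$ is $\bf F_{\sigma}$.

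This is the main obstacle. For general arithmetical $\theta$ the set of witnesses is not closed. We therefore restrict to the case where witnesses are unique, or to at most one path through a tree $T_{n}$. In that case the witness set is at most a singleton, so it is trivially closed, exactly as in the proof of Theorem \ref{justnin}. The closure $\overline{B}$ then exists by item \eqref{salmu}, and
\[
(\exists f)(f\in [T_{n}]) \asa (\exists q\in (n,n+1)\cap \Q)(q\in \overline{B}),
\]
where the right-hand side is arithmetical in $\overline{B}$. Hence the set of $n$ with $[T_{n}]\ne\emptyset$ exists. By the known equivalence for `unique path' principles (Simpson's results in \cite{simpson2}*{V}, e.g.\ the $\Sigma_{1}^{1}$-$\textsf{AC}_{0}$/unique-path characterisations of $\ATR_{0}$ over $\ACA_{0}$), this yields $\ATR_{0}$. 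If a single-valuedness statement is inconvenient, we would instead use that $\ATR_{0}$ is equivalent to the comparability of well-orderings, via the hyperarithmetical hierarchy along a well-order, whose stages are unique.

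For conservativity, we build a model of $\ACAo$ plus item \eqref{salmu} over any model of $\ATR_{0}$, or at least show that any second-order consequence is provable in $\ATR_{0}$. The plan has two steps. First, $\ACAo+\textup{item \eqref{salmu}}$ follows from a suitable higher-order version of $\ATR_{0}$, namely an extension of $\ACAo$ by a principle asserting closure under hyperarithmetic-style constructions. Second, that extension is conservative over $\ATR_{0}$, by an ECF-type interpretation as in \cite{hunterphd} and \cite{kohlenbach2}.

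Under the ECF interpretation, third-order objects become continuous codes. An $\bf F_{\sigma}$ set given by a sequence of closed sets with representation functions becomes a sequence of RM-closed sets. For such codes the closure of $\cup_{n}C_{n}$ is $\Sigma_{1}^{1}$-definable, since $x\in \overline{B}$ iff every ball meets some $C_{n}$, and meeting a code for a closed set is $\Sigma_{1}^{1}$. Item \eqref{salmu} is interpreted as the existence of an RM-closed code, or a Borel code. We expect the needed set comprehension to reduce to $\Sigma_{1}^{1}$-properties of countably many rational balls. That reduction is available in $\ATR_{0}$ via $\Sigma_{1}^{1}$-separation, because the closure is also $\Pi_{1}^{1}$-definable in the presence of the code, by a Cantor–Bendixson-free argument on rational balls. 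Checking this two-sided definability is the delicate point of the conservativity half.
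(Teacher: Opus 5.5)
Your derivation of $\ATR_{0}$ is essentially the paper's argument. Uniqueness of witnesses makes each shifted witness set at most a singleton, hence closed. The union over rational shifts is then ${\bf F}_{\sigma}$, and its closure decides arithmetically whether a witness exists. This gives the `at most one' comprehension principle and hence $\ATR_{0}$ (Simpson's V.5.2). Your continued-fraction coding even avoids the dyadic case distinction the paper makes with $(\exists^{2})$. Two corrections. First, $\Sigma^{1}_{1}$-\textsf{AC}$_{0}$ does not characterise $\ATR_{0}$, since it is $\Pi^{1}_{2}$-conservative over $\ACA_{0}$. Second, run the construction directly on an arithmetical $\varphi(n,f)$ with at most one witness, as the paper does, rather than on trees. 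The singleton argument uses nothing but uniqueness, and this avoids an extra normal-form reduction.

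The conservativity half has a genuine gap. An ECF-type interpretation cannot be used for any extension of $\ACAo$. Under ECF every functional is continuous, so $(\exists^{2})$ is translated into a statement refutable in $\RCA_{0}$: a functional deciding $(\exists n)(f(n)=0)$ is discontinuous at $11\dots$. Hunter's conservation results for $\ACAo$ rest on a different model construction.

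The `suitable higher-order version of $\ATR_{0}$' is also never specified, and it cannot have merely second-order content. By Theorem \ref{justnin}, item \eqref{salmu} plus $\QFAC^{0,1}$ proves $\NIN_{[0,1]}$, so item \eqref{salmu} is unprovable even in $\Z_{2}^{\omega}+\QFAC^{0,1}$. The missing idea is a genuinely third-order principle that supplies RM-codes for the closed sets $F_{n}$ and is already known to be conservative over $\ATR_{0}$. The paper uses $\open$ (open sets have RM-codes) together with the conservativity of $\ACAo+\open$ over $\ATR_{0}$.

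Once codes are available, nothing delicate remains. RM-closed sets are separably closed in $\ACA_{0}$, so with dense sequences $(x_{n,m})_{m\in\N}$ for the $F_{n}$ one has $x\in\overline{A}\asa(\forall k)(\exists n,m)(|x-x_{n,m}|<\frac{1}{2^{k}})$, which is arithmetical. Likewise, `a ball meets a coded closed set' is arithmetical via Heine--Borel on closed sub-balls, not merely $\Sigma^{1}_{1}$. So your two-sided $\Sigma^{1}_{1}$/$\Pi^{1}_{1}$ definability and the appeal to $\Sigma^{1}_{1}$-separation address a non-issue. The real difficulty is obtaining codes for uncoded closed sets in a theory that is still conservative over $\ATR_{0}$, and your proposal leaves it untouched.
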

\begin{proof}
For the first part, let $\varphi$ be arithmetical and such that $(\forall n\in \N)(\exists \textup{ at most one } f \in 2^{\N})\varphi(n, f)$. 
Since $(\exists^{2})$ is given, we can decide if $(\exists \sigma \in 2^{<\N})\varphi(n, \sigma*11\dots)$ or $(\exists \sigma \in 2^{<\N})\varphi(n, \sigma*00\dots)$; we may thus assume this case does not occur.  
Now define the set $D\subset \R$ as follows: $x\in D$ if and only if
\[
 (\exists n\in \N, q\in \Q)\big(n+1<x-q<n+2 \wedge  \varphi(n, \eta(x-n-1-q))\big),
\]
where $\eta:[0,1]\di 2^{\N}$ outputs the binary representation of the input, with a tail of zeros if applicable.  
Note that $\eta$ is readily defined via the usual interval-halving technique using $(\exists^{2})$. 
Let $(q_{n})_{n\in \N}$ be an enumeration of $\Q\cap [0,1]$ and observe that $D=\cup_{n,m\in \N}D_{n, m}$ with 
\[
D_{n, m}:=
 \{x \in \R^{+} : n+1<x-q_{m}<n+2\wedge  \varphi(n, \eta(x-n-1-q_{m})) \}   
\]
being a closed set as $D_{n,m}$ is at most a singleton.   Now consider the following readily-proved equivalence:
\be\label{temupemu2}
(\exists f\in 2^{\N})\varphi(n, f)\asa (\exists q\in (n, n+1)\cap \Q)( q\in \overline{D} ). 
\ee
Since the right-hand of \eqref{temupemu2} is arithmetical, there is $X\subset \N$ with $n\in X\asa (\exists f\in 2^{\N})\varphi(n, f)$.  The associated `at most one' comprehension principle implies $\ATR_{0}$ by \cite{simpson2}*{V.5.2}, and we are done.

\smallskip

For the second part, it suffices to establish item \eqref{salmu} in $\ACAo+\open$ as the latter is conservative over $\ATR_{0}$ (see \cite{hunterphd, pastebee}).  
A detailed proof of this claim is found in \cite{samBOOK}*{I.2.4}. 
Now let $A\subset \R$ be $\bf F_{\sigma}$, i.e.\ $A=\cup_{n\in \N}F_{n}$ where each $F_{n}$ is closed.  
Using $\open$, there is a sequence of second-order codes $(C_{n})_{n\in \N}$ such that $C_{n}$ equals $F_{n}$.  
Now, \emph{dense sub-sequences} are another second-order representation of closed sets, also called \emph{separably closed} sets.
This notion is equivalent to RM-closed for compact spaces in $\ACA_{0}$, by \cite{browner2}*{Theorem 3.3} and the same for sequences of such sets.
Hence, each $C_{n}$ is separably closed, which yields a double sequence $(x_{n,m})_{n,m\in \N}$ such that $x\in F_{n}\asa (\forall k\in \N)(\exists m\in \N)(|x-x_{n,m}|<\frac{1}{2^{k}})$.  
The definition of the closure of ${A}$ is now straightforward, as follows:  
\be\label{fooball}\textstyle
x\in \overline{A}\asa (\forall k\in \N)(\exists n,m\in \N)(|x-x_{n,m}|<\frac{1}{2^{k}}).  
\ee
Since the right-hand side of \eqref{fooball} is arithmetical, we are done. 
\end{proof}
Next, we have the following theorem expressing that the general existence of the closure and interior is much stronger than $\ATR_{0}$.
\begin{thm}[$\ACAo$]\label{tenar}
The following are equivalent.
\begin{itemize}
\item The principle $\BOOT$.
\item For any $X\subset \R$, the set $\INT(X)$ exists. 
\item For any $X\subset \R$, the set $\INT(X)$ exists and has an RM-code. 
\item For any $X\subset \R$, the set $\overline{X}$ exists. 
\item For any $X\subset \R$, the set $\overline{X}$ exists and has an RM-code. 
\end{itemize}
\end{thm}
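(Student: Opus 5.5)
We prove the cycle: $\BOOT$ implies the two `exists and has an RM-code' items, these trivially imply the corresponding `exists' items, and each `exists' item implies $\BOOT$.

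\emph{From $\BOOT$ to the existence statements.} Fix $X\subset \R$ with representation function $F_{X}$. For rationals $a<b$, the statement $(\exists y\in (a,b))(y\in X)$ has the form $(\exists f\in \N^{\N})(Y(f,\langle a,b\rangle)=0)$. Here $Y$ is defined via $(\exists^{2})$ from $F_{X}$: $f$ codes a real $y$, and we check $a<y<b$ and $F_{X}(y)>0$, which is arithmetical and hence decidable. Applying $\BOOT$ to this $Y$ and to the complementary predicate $Y'$ (expressing `$y\in(a,b)$ and $F_{X}(y)=0$') yields sets $S, S'\subset \N$:
\begin{itemize}
\item $S$ collects the rational intervals meeting $X$;
\item $S'$ collects the rational intervals meeting $X^{c}$.
\end{itemize}
Then $x\in \overline{X}$ iff every rational interval containing $x$ lies in $S$. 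To see this, note that a limit point or an element of $X$ meets every neighbourhood, and conversely. This condition is arithmetical in $S$, so $\overline{X}$ exists by $(\exists^{2})$. Moreover, $\cup\{(a,b): \langle a,b\rangle\notin S\}$ is an RM-code of the open complement of $\overline{X}$, which is the intended RM-code.

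Similarly, $\INT(X)$ consists of the $x$ with $x\in X$ such that some rational interval around $x$ is not in $S'$. Thus $\INT(X)$ is decidable, and $\cup\{(a,b):\langle a,b\rangle\notin S'\}$ is an RM-code for it, since intervals avoiding $X^{c}$ lie inside $X$ and so the union equals $\INT(X)$.

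\emph{From the existence statements to $\BOOT$.} Given $Y^{2}$, we encode as in Theorems~\ref{justnin} and~\ref{moare}. Let $\eta:[0,1]\to 2^{\N}$ be as before, and first reduce $\BOOT$ to quantification over $2^{\N}$ (or over $[0,1]$). This reduction uses a standard homeomorphism of $\N^{\N}$ with the irrationals in $[0,1]$, available via $(\exists^{2})$. Define
\[
x\in A\asa (\exists n\in\N)\big(n+1<x<n+2\wedge Y(\eta(x-n-1),n)=0\big),
\]
which is decidable by $(\exists^{2})$. Then $(\exists f)(Y(f,n)=0)$ holds iff $\overline{A}\cap [n+1,n+2]\neq\emptyset$. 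To make this arithmetical, we apply the earlier trick of adding all rational shifts: define $A$ via $(\exists q\in\Q\cap[0,1])$ so that any witness spreads densely over $(n+1,n+2)$. Then the left-hand side holds iff some rational $q\in(n+1,n+2)$ lies in $\overline{A}$, exactly as in \eqref{temupemu2}. This condition is arithmetical in $\overline{A}$, so $X$ exists by $(\exists^{2})$.

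For the interior we pass to complements. Let $B=A^{c}$, so that $\INT(B)=(\overline{A})^{c}$ and $(\exists f)(Y(f,n)=0)$ iff some rational in $(n+1,n+2)$ is not in $\INT(B)$.

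\emph{Main obstacle.} The main obstacle is the verification that $\eta$ and the coding of Baire space by reals cooperate well enough that the shifted set $A$ yields the equivalence. In particular, one must handle the non-uniqueness of binary expansions, which $\eta$ resolves by choosing tails of zeros. This is only a definitional check and involves no use of at-most-one hypotheses, since $\BOOT$ needs no uniqueness.
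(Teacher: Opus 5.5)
Your proposal is correct and follows essentially the paper's route. For the forward direction, $\BOOT$ applied to a predicate about rational intervals and $X$ gives RM-codes for $\INT(X)$ and, by complementation, for $\overline{X}$; the paper uses the complementary condition $B(q,\frac{1}{2^{N}})\subset X$, which is the same thing. The reversal uses the same rational-shift coding via $\eta$ as in \eqref{temupemu2}. The only cosmetic difference is that you apply $\BOOT$ directly, with the parameter $n$ inside $Y$ after reducing Baire space to Cantor space or the unit interval, whereas the paper goes through the equivalent principle $\RANGE$ and leaves the interior-to-closure complementation implicit.
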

\begin{proof}
To derive $\BOOT$, it suffices to obtain $\RANGE$, defined as follows:
\[
(\forall G:\N^{\N}\di \N)(\exists Z\subset \N)(\forall n \in \N)\big[n\in Z\asa (\exists f\in 2^{\N})(G(f)=n)  ].
\]
The equivalence between $\BOOT$ and $\RANGE$ is straightforward and may be found in \cite{samHARD, samBOOK}. 
Now fix $G:\N^{\N}\di \N$ and let $X\subset \R$ be the set of all $x\in \R$ such that
\[
 (\exists n\in \N, q\in \Q)\big(n+1<x-q<n+2 \wedge  G(\eta(x-n-1-q))=n\big).
\]
where $\eta$ is as in the proof of Theorem \ref{moare}.  
As above, we may ignore elements of Cantor space of the form $\sigma*11\dots$ for $\sigma \in 2^{<\N}$.     
We then have, for all $n\in \N$, that 
\be\label{woffo}
(\exists f\in 2^{\N})(G(f)=n)\asa (\exists r\in [n+1,n+2]\cap \Q)( r\in \overline{X} ),
\ee
which yields the range of $G$ as the right-hand side of \eqref{woffo} is arithmetical.

\smallskip

To define the interior of an arbitrary set $X\subset \R$, use $\BOOT$ to find $X_{0}\subset (\Q\times \N)$ such that $(q, N)\in X_{0}\asa [ B(q, \frac{1}{2^{N}})\subset X]$.  
Now observe that $\INT(X)=\cup_{(q, N)\in X_{0}}B(q, \frac{1}{2^{N}})$, which follows by definition.  Note that the latter union is an RM-code, i.e.\ all other items now follow.     
\end{proof}
Finally, we show that the fourth-order closure and interior operators are equivalent to Kleene's quantifier $(\exists^{3})$.
\begin{cor}[$\ACAo$]\label{fefequo}
The following are equivalent. 
\begin{itemize}
\item Kleene's quantifier $(\exists^{3})$. 
\item There exists $\mathfrak{I}:(\R\di \R)\di (\R\di \R)$ such that for any $X\subset \R$, $\mathfrak{I}(X)$ is the interior of $X$.  
\item There exists $\mathfrak{C}:(\R\di \R)\di (\R\di \R)$ such that for any $X\subset \R$, $\mathfrak{C}(X)$ is the closure of $X$.  
\end{itemize}
\end{cor}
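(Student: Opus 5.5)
We prove the cycle of implications by uniformising the proof of Theorem \ref{tenar}.

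\emph{From $(\exists^{3})$ to the interior and closure operators.} Given $X\subset\R$ via its representation function $F_{X}$, consider for rationals $q$ and $N\in\N$ the statement $B(q,\frac{1}{2^{N}})\subset X$. This is $\Pi$-like in $F_{X}$: for all $y\in B(q,\frac{1}{2^{N}})$ we have $F_{X}(y)>_{\R}0$. Using $(\exists^{2})$, the matrix can be turned into $Y(f)=0$ for a functional $Y$ built uniformly from $F_{X}$, $q$ and $N$. Then $\exists^{3}$ decides it uniformly in $(X,q,N)$.
\begin{itemize}
\item We define $\mathfrak{I}(X)(x)$ to be $1$ if there are $(q,N)$ with $|x-q|<\frac{1}{2^{N}}$ and $B(q,\frac{1}{2^{N}})\subset X$, and $0$ otherwise. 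This is arithmetical in the decided predicate, so $(\exists^{2})$ evaluates it, exactly as in the second part of the proof of Theorem \ref{tenar}.
\item For closure, $x\in\overline{X}$ holds if and only if $(\forall k)(\exists y\in B(x,\frac{1}{2^{k}}))(y\in X)$. Here we use $\exists^{3}$ to decide, uniformly in $X,q,k$, whether $(\exists y\in B(q,\frac{1}{2^{k}}))(F_{X}(y)>0)$. We then take the arithmetical combination over rationals $q$ near $x$, evaluated by $\exists^{2}$.
\end{itemize}
One must check that these definitions respect real equality; they do because they depend only on $x$ through strict inequalities with rationals. One must also check that the outputs are legitimate representation functions $\R\to\R$.

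\emph{From the operators to $(\exists^{3})$.} It is well known that $(\exists^{3})$ is equivalent over $\ACAo$ to the existence of a functional deciding $(\exists f\in 2^{\N})(G(f)=0)$ for every $G:2^{\N}\to\N$; reducing $\N^{\N}$ to $2^{\N}$ is standard with $\exists^{2}$. Given the closure operator $\mathfrak{C}$ and $G$, we build uniformly in $G$ the set
\[
X_{G}:=\{x : (\exists q\in\Q)(1<x-q<2\wedge G(\eta(x-1-q))=0)\},
\]
as in the proof of Theorem \ref{tenar}, ignoring the dyadic tails $\sigma*11\dots$ as there; those finitely describable candidates can be checked directly with $\exists^{2}$. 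Mirroring \eqref{woffo}, $(\exists f\in 2^{\N})(G(f)=0)$ holds if and only if $(\exists r\in[1,2]\cap\Q)(r\in\mathfrak{C}(X_{G}))$. The right-hand side is decidable by $\exists^{2}$ applied to $\mathfrak{C}(X_{G})$, so $\lambda G.\,E(\mathfrak{C}(X_{G}))$ gives $\exists^{3}$.

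For the interior operator, we use $\INT(X)^{c}=\overline{X^{c}}$ to get $\mathfrak{C}(X)=(\mathfrak{I}(X^{c}))^{c}$. Taking complements of representation functions is uniform given $\exists^{2}$, via $x\mapsto 1-\mathrm{sg}(F(x))$.

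\emph{Main obstacle.} The main difficulty is bookkeeping in the translation between predicates over $\R$ and $\exists^{3}$-applications to type-two functionals. We must make sure the functional handed to $\exists^{3}$ is total and built uniformly from $X$ and the parameters, including coding reals as elements of $\N^{\N}$ and handling non-canonical representatives. We must equally make sure $X_{G}$ is built from $G$ by a term, so that the reversal is uniform. Both steps are routine with $\exists^{2}$ available in $\ACAo$.
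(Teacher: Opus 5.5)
Your proposal is correct and follows the paper's route. The forward direction is just the definitions evaluated with $\exists^{3}$ and $\exists^{2}$; for the reversal you, like the paper, build from $G$ a set that is empty or dense according as $G$ has a zero, so that membership of some rational in its closure is arithmetical, and then reduce Baire space to Cantor space via $\exists^{2}$. Your one addition is handling the interior operator explicitly through $\overline{X}=(\INT(X^{c}))^{c}$, a step the paper leaves implicit.
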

\begin{proof}
That $(\exists^{3})$ proves the other items follows by the definition of interior and closure.  
For the other implications, fix $Y^{2}$ and use $(\exists^{2})$ to define $X_{Y}$ as the set of all $x\in \R$ with $(\exists q\in \Q)\big( Y(\eta(|x|-\lfloor |x|\rceil+ q\big)=0)$.  
Clearly, we have 
\[
(\exists f\in 2^{\N})(Y(f)=0)\asa (\exists r\in \Q )(r\in \overline{X_{Y}}),
\]
where we note that the right-hand side is arithmetical if the closure is given as in the final item.  
Since elements of Baire space can be coded as elements of Cantor space using $(\exists^{2})$, we obtain $(\exists^{3})$ as required. 
\end{proof}
We finish this section with a remark on how the above results yield equivalences for the Big Five, as explored in detail in \cite{samBOOK}. 
\begin{rem}\label{tiritomba}\rm
As in \cites{samBOOK, samtar}, equivalences involving e.g.\ $\open$ or $\BOOT$ can be `pushed down' to equivalences involving $\ATR_{0}$ or $\FIVE$ if we impose certain fairly natural restrictions 
on the function classes at hand.  One of these restrictions is to consider only \emph{effectively Baire 2} functions, which are given by the double limit of a double sequence of continuous functions.  Note that a Baire 2 function 
is `just' the limit of a sequence of Baire 1 functions, i.e.\ no double sequence is given.   

\smallskip

Other such `effective' restrictions are based on inverse images.  
For instance, a function $f:\R\di \R$ is measurable if the inverse image $f^{-1}(O)$ of any open set $O\subset \R$ is the union of an $\bf F_{\sigma}$-set and a set of measure zero. 
Then `effectively measurable' essentially means that all sets are given by codes.   Following \cites{samBOOK, samtar}, the proofs of Theorems \ref{moare} and \ref{tenar} yield equivalences for $\ATR_{0}$ and $\FIVE$, working over at least $\ACAo$, if we restrict to sets with an effectively Baire 2 characteristic function (or the other aforementioned effective notions).   
\end{rem}
In conclusion, we have obtained a number of equivalences involving the closure, interior, and boundary of certain classes of sets.  The basic case of open sets 
goes through assuming arithmetical comprehension, while the case of $\bf F_{\sigma}$-sets already yields $\ATR_{0}$ and the general case yields full second-order arithmetic in the guise of Feferman's projection principle and Kleene's quantifier $(\exists^{3})$.

\section{Sets and their representations}
\subsection{The $D$-representation of open sets}
We introduce the $D$-representation of open sets in Definition~\ref{charkar} and obtain equivalences in Theorem \ref{trefor} involving basic properties of $D$-open sets  and the enumeration principle $\cocode_{0}$.  The latter expresses that countable sets of reals can be enumerated, implies $\ATR_{0}$ when combined with $\ACAo$, and already boasts many equivalences from Fourier analysis and other fields (\cite{dagsamXI, samBIG, samBOOK, samBIG3}).   

\smallskip

\smallskip
\noindent
First of all, we consider the following representation of open sets which appears to be a slight generalisation of the RM-definition.   
\bdefi\label{charkar} Let $A\subset \R$ be a set.  We say that:
\begin{itemize}
\item $A$ is \emph{$D$-open} if there is a sequence $(J_{n})_{n\in \N}$ such that $O=\cup_{n\in \N}J_{n}$ and each $J_{n}$ is either an open interval or the union of two open intervals,
\item $A$ is \emph{$D$-closed} if the complement is $D$-open, 
\item $A$ is ${\bf F}^{D}_{\sigma}$ if it is the countable union of $D$-closed sets. 
\end{itemize}
\edefi
\noindent
Note that we generally cannot check which case holds in the definition of $D$-open sets as quantifiers over $\R$ are involved.
Nonetheless, $D$-closed sets have an RM-code if they are also perfect, which we found surprising.  
\begin{thm}[$\ACAo$]\label{zefrre} 
A perfect $D$-closed set of reals is RM-closed.  
\end{thm}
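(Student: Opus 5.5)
The idea is to read off the endpoints of each $J_{n}$ from its rational points using $(\exists^{2})$, and to use perfectness to show that the one thing rationals cannot see — a missing irrational point — never actually matters. Let $C$ be perfect and $D$-closed, so that $O:=C^{c}=\cup_{n\in \N}J_{n}$, where each $J_{n}$ is either an open interval or a union of two open intervals, and we cannot decide which. I will build sequences $(a_{m})_{m\in \N},(b_{m})_{m\in \N}$ with $O=\cup_{m}(a_{m},b_{m})$, which makes $C$ RM-closed.

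\emph{Step 1: finitely many candidate intervals per $n$.} Fix $n$. By Definition~\ref{char}, the set $J_{n}$ comes with a representation function, so $(\exists^{2})$ decides $q\in J_{n}$ for rational $q$, uniformly in $n$. The set $J_{n}\cap\Q$ splits into at most two maximal `rational components'. Two rationals $q<r$ in $J_{n}$ lie in the same component if and only if $(\forall s\in[q,r]\cap\Q)(s\in J_{n})$. This condition is arithmetical and hence decidable with $(\exists^{2})$.

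For each of the at most two components, $(\exists^{2})$ computes its infimum and supremum as reals via the usual interval-halving. This gives at most two intervals $(a_{n,0},b_{n,0})$ and $(a_{n,1},b_{n,1})$; if there is only one component, I duplicate it, and an empty $J_{n}$ is handled by emitting an empty interval. All of this is uniform in $n$, so we obtain a single sequence of intervals $(a_{m},b_{m})_{m\in\N}$.

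\emph{Step 2: the union is contained in $O$ up to single points.} If $J_{n}=(a,b)\cup(c,d)$ with $b<c$, the rationals in $(b,c)$ witness the gap. Then the components are exactly $(a,b)\cap\Q$ and $(c,d)\cap\Q$, and the computed intervals are exactly $(a,b)$ and $(c,d)$. The same holds trivially when $J_{n}$ is a single interval. The only problematic case is $b=c$ with $b$ irrational. Here rationals cannot see the missing point, so the computed interval is $(a,d)$, which may strictly contain $J_{n}=(a,d)\setminus\{b\}$. In every case the computed intervals cover $J_{n}$ and differ from it by at most one point. Hence $O\subseteq\cup_{m}(a_{m},b_{m})$.

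\emph{Step 3: perfectness removes the missing points.} This is the main obstacle, and perfectness is exactly what is needed. Suppose $x\in(a_{m},b_{m})$ but $x\notin O$, so $x\in C$. By Step 2, $x$ must be the single missing point $b$ of some $J_{n}=(a,b)\cup(b,d)$. Then $(a,b)\cup(b,d)\subseteq O$, so $B(x,\varepsilon)\cap C=\{x\}$ for small $\varepsilon$, i.e.\ $x$ is an isolated point of $C$. This contradicts the perfectness of $C$. Therefore $\cup_{m}(a_{m},b_{m})\subseteq O$, hence the two sets are equal, and the sequences $(a_{m}),(b_{m})$ form an RM-code for $O$.

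I expect the remaining care to lie in Step 1: checking that the components and their endpoints are defined arithmetically and uniformly in $n$, and handling degenerate cases such as empty $J_{n}$. All of this is routine using $(\exists^{2})$ within $\ACAo$.
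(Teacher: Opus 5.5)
Your proposal is correct and follows essentially the same argument as the paper. The intervals you extract from the rational components of $J_{n}$ are exactly the components of $\INT(\overline{J_{n}})$, which the paper instead obtains as RM-open sets via Theorem~\ref{talkingaboud}. The paper then uses the same observation you make in Step 3: a point of $\INT(\overline{J_{n}})\setminus J_{n}$ lying in $C$ would be an isolated point of $C$, contradicting perfectness.
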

\begin{proof}
Suppose $C\subset \R$ is $D$-closed and perfect.  
Let $O:=\R\setminus C$ satisfy $O=\cup_{n\in \N}J_{n}$ where the latter is a $D$-representation.
Consider $\tilde{O}:= \cup_{n\in \N}  I_{n}$ where $I_{n}=\INT(\overline{J_{n}})$ is defined using Theorem \ref{talkingaboud}.
We now show that $O=\tilde{O}$. 
Indeed, we trivially have $O\subseteq \tilde{O}$ and suppose $x_{0}\in \tilde{O}\setminus O$.
If $x_{0}\in I_{n_{0}}$, then $x_{0}\not \in J_{n_{0}}$ and $x_{0}\in C$, i.e.\ $x_{0}$ is an isolated point of $C$, contradiction.
\end{proof}
By Theorem \ref{zefrre}, $\ATR_{0}$ now follows from the \emph{perfect set theorem} as follows: 
\begin{center}
\emph{any non-enumerable $D$-closed set has a $D$-closed and perfect subset}, 
\end{center}
as the latter for RM-codes implies $\ATR_{0}$ by \cite{simpson2}*{V.5.5}.  
We shall in fact obtain an equivalence to $\cocode_{0}$ in Theorem \ref{trefor}.

\smallskip

Secondly, we establish basic properties of $D$-open sets using only arithmetical comprehension as in $\ACAo$.  
Using Definition \ref{charkar}, we can now formulate the following $D$-versions of continuity, semi-continuity\footnote{We use `usco' to abbreviate `upper semi-continuous'.}, and Baire 1.
Indeed, dropping `$D$-' in the following definition yields the usual definitions (in some equivalent form).  
\bdefi For $f:\R\di \R$, we say that:
\begin{itemize}
\item $f$ is $D$-continuous if $f^{-1}(O)$ is $D$-open for $D$-open $O\subset \R$,
\item $f$ is $D$-usco if the level sets $\{ x\in \R: f(x)<y\}$ are $D$-open for all $y\in \R$, 
\item $f$ is $D$-Baire 1 if $f^{-1}(O)$ is ${\bf F}_{\sigma}^{D}$ for any $D$-open $O\subset \R$,
\item $f$ is Baire 1 if it is the pointwise limit of a sequence of continuous functions,
\item $f$ is Borel measurable of class $1$ if $f^{-1}(O)$ is ${\bf F}_{\sigma}$ for any open $O\subset \R$.
\end{itemize}
\edefi
Many function classes are defined via open or closed sets, i.e.\ we could generalise Theorem \ref{trefor} along these lines.  
In the case of continuity, there is no real difference.   
\begin{thm}[$\ACAo$]
A function on the reals is $D$-continuous iff it is continuous
\end{thm}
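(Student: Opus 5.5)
We work in $\ACAo$ and prove the two directions separately. Throughout, `continuous' means the usual epsilon-delta notion, which for $f:\R\di\R$ amounts to: $f^{-1}(O)$ is open, in the sense of Definition \ref{char}(\ref{qzopen}), for every open $O$.

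The key observation is that $D$-open sets are open in this sense. If $O=\cup_{n}J_{n}$ with each $J_{n}$ an open interval or a union of two open intervals, then each $J_{n}$ is open. Hence any $x\in O$ lies in some open interval contained in $O$, so $B(x,\frac{1}{2^{k}})\subseteq O$ for some $k$. Conversely, every RM-open set $\cup_{n}(a_{n},b_{n})$ is trivially $D$-open: take $J_{n}=(a_{n},b_{n})$.

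For the forward direction, let $f$ be $D$-continuous. Fix $x$ and $k$, and let $O=B(f(x),\frac{1}{2^{k}})$. This is an interval, hence $D$-open, so $f^{-1}(O)$ is $D$-open and therefore open by the observation. Since $x\in f^{-1}(O)$, there is $N$ with $B(x,\frac{1}{2^{N}})\subseteq f^{-1}(O)$. This is exactly epsilon-delta continuity at $x$, and no choice is needed.

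For the reverse direction, let $f$ be continuous and $O$ be $D$-open. Then $O$ is open, so $f^{-1}(O)$ is open, and we must show it is $D$-open. The main obstacle is producing a \emph{sequence} of intervals: $O$ is only given as a $D$-representation, and open sets need not have codes in general (Remark \ref{XzX}).

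To handle this, use $(\exists^{2})$ and the fact that $f$ is continuous. First, decide membership in $O$ for rational points via the characteristic function. Then form the set of pairs $(q,N)\in\Q\times\N$ such that
\[\textstyle
(\forall r\in B(q,\frac{1}{2^{N}})\cap\Q)\big(f(r)\in O\big).
\]
This set exists by $(\exists^{2})$, since for fixed $r$ the condition $f(r)\in O$ is decided by the representation function of $O$ applied to $f(r)$.

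Two facts must then be checked. First, for such a pair $(q,N)$ the whole ball $B(q,\frac{1}{2^{N}})$ lies in $f^{-1}(O)$; this requires closing the passage from rationals to all reals. Second, every point of $f^{-1}(O)$ lies in such a ball; this is immediate from openness of $f^{-1}(O)$, by shrinking to a rational ball.

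The first fact is delicate, since $O$ being open does not make it closed under limits. I would fix it by strengthening the condition to require a margin in the image: demand that
\[\textstyle
(\exists M)(\forall r\in B(q,\frac{1}{2^{N}})\cap\Q)(\forall s\in \Q\cap B(f(r),\frac{1}{2^{M}}))(s\in O).
\]
Then for real $y$ in the ball, continuity places $f(y)$ within distance $\frac{1}{2^{M+1}}$ of some $f(r)$, so $f(y)$ is a limit of rationals that all lie in the open set $B(f(r),\frac{1}{2^{M}})\cap\Q\subseteq O$. Because $O$ is open, $B(f(r),\frac{1}{2^{M}})\subseteq O$ as in the proof of Theorem \ref{talkingaboud}, so $f(y)\in O$.

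Enumerating the selected balls then gives an RM-code, hence a $D$-representation with every $J_{n}$ a single interval.
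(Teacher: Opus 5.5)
Your forward direction is correct and is the paper's argument. The reverse direction fails at its last step. From $B(f(r),\frac{1}{2^{M}})\cap\Q\subseteq O$ and the openness of $O$ you cannot infer $B(f(r),\frac{1}{2^{M}})\subseteq O$. The argument in the proof of Theorem~\ref{talkingaboud} runs the other way: it uses that $C$ is \emph{closed} to pass from $(a,b)\cap\Q\subseteq C$ to $(a,b)\subseteq C$. Open sets are not determined by their rational points. Concretely, $O=\R\setminus\{\sqrt{2}\}$ is $D$-open (e.g.\ with pieces $(-n,\sqrt{2})\cup(\sqrt{2},n)$) and contains every rational. So for $f$ the identity, your margin condition holds for every pair $(q,N)$, and your `code' is a code of $\R$.

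This is not a repairable slip. Your construction is uniform, uses only $(\exists^{2})$, and outputs an RM-code of $f^{-1}(O)$. For $f$ the identity it would therefore prove $\open_{D}$ in $\ACAo$. By the proof of Theorem~\ref{trefor}, that yields $\cocode_{0}$ and hence $\ATR_{0}$, contradicting the conservativity of $\ACAo$ over $\ACA_{0}$.

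Any argument here must output genuine $D$-pieces that keep the unlocated exceptional points hidden, and that is the paper's route. It forms $I_{n}=\INT(\overline{J_{n}})$ via Theorem~\ref{talkingaboud} and notes that $E_{n}=I_{n}\setminus J_{n}$ has at most one point. It then covers $f^{-1}(I_{n})$ by rational balls $N_{n,q,m}$ via a modulus of uniform continuity and deletes the exceptional point from each ball.

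Be aware that this last step is itself defective as written. $E_{n}$ lives in the codomain, and what must be removed from a domain ball is $f^{-1}(E_{n})$, which for non-monotone $f$ can meet the ball in infinitely many points. In fact the direction `continuous implies $D$-continuous' appears to lie beyond $\ACAo$, as follows.
\begin{itemize}
\item Take $B$ as in \eqref{poil}, and let $h$ be the first coordinate of the standard Hilbert curve, which runs from $(0,0)$ to $(1,0)$. Every fibre of $h$ is infinite.
\item Put $f(x)=2n+h(x-2n)$ on each $[2n,2n+1]$ and $f(x)=x$ elsewhere; this $f$ is continuous.
\item Given a $D$-representation $(K_{m})_{m\in\N}$ of $f^{-1}(\R\setminus B)$, the sets $I'_{m}:=\INT(\overline{K_{m}})$ are RM-open by Theorem~\ref{talkingaboud}. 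Each $I'_{m}\setminus K_{m}$ has at most one point.
\item For $b\in B\cap[2n,2n+1]$, the fibre $f^{-1}(b)\cap[2n,2n+1]$ is infinite and misses every $K_{m}$. Heine--Borel then shows that $B\cap[2n,2n+1]\neq\emptyset$ iff $[2n,2n+1]\not\subseteq\cup_{m\in\N}I'_{m}$, which is an arithmetical condition.
\end{itemize}
This gives $\range_{0}$, hence $\cocode_{0}$ and $\ATR_{0}$. So this direction is not provable in $\ACAo$ by any method.
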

\begin{proof}
Fix $f:\R\di \R$ and suppose it is $D$-continuous
Then $O:=B(f(x), \frac{1}{2^{k}})$ is $D$-open and there is $N\in \N$ with  $B(x, \frac{1}{2^{N}})\subset  f^{-1}(O)$ as the latter is $D$-open by assumption.
Hence, $f$ is (epsilon-delta) continuous, as required. 

\smallskip

Now suppose $f$ is continuous and let $O=\cup_{n\in \N}J_{n}$ be $D$-open.  
Define $I_{n}:= \INT(\overline{J_{n}})$ using Theorem \ref{talkingaboud} and note that $E_{n}:=I_{n}\setminus J_{n}$ is at most a singleton. 
Since $f$ is continuous, it is uniformly continuous on each $[-m, m]$ (see \cite{dagsamXIV}*{\S2}), say with modulus of uniform continuity $g_{m}:\R\di \R$ such that $|x-y|<g_{m}(\eps)$ implies $|f(x)-f(y)|<\eps$ for any $x, y\in [-m,m]$. 
Define $N_{n, q, m}$ as $B(q, g_{m}(  d(q,\R \setminus I_{n}) ))$ and observe that $f^{-1}(I_{n})$ equals $\cup_{m\in \N}\cup_{q\in \Q\cap [-m, m], f(q)\in I_{n}}N_{n, q, m}$.  
Now define $M_{n, q, m}:=N_{n, q, m}\setminus E_{n} $ and observe that this is either an open interval or the union of two open intervals.  
By definition, $f^{-1}(O)$ equals the countable union  $\cup_{m,n\in \N}\cup_{q\in \Q\cap [-m, m], f(q)\in I_{n}}M_{n, q, m}$, which is as required after some straightforward modification using $(\exists^{2})$. 
\end{proof}
The connection between closed sets and usco functions is well-known:  a set $C\subset\R$ is closed iff $\mathbb{1}_{C}$ is usco.  
This also holds for the $D$-representation. 
\begin{thm} ~
\begin{itemize}
\item Over $\ACAo$, the set $C\subset \R$ is $D$-closed iff $\mathbb{1}_{C}$ is $D$-usco. 
\item Over $\RCAo+\WKL_{0}$, the set $C\subset [0,1]$ is $D$-closed iff there exist a $D$-usco representation function $F_{C}:[0,1]\di \R$ for $C$. 
\end{itemize}
\end{thm}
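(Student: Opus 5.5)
For the first item, the plan is to unfold the definitions directly; both directions reduce to comparing level sets of $\mathbb{1}_{C}$ with $C^{c}$. For $y\in \R$ we have
\[
\{x\in \R: \mathbb{1}_{C}(x)<y\}=
\begin{cases}
\emptyset & \text{if } y\leq 0,\\
\R\setminus C & \text{if } 0<y\leq 1,\\
\R & \text{if } y>1.
\end{cases}
\]
The case distinction on $y$ is decidable using $(\exists^{2})$. The sets $\emptyset$ and $\R$ are trivially $D$-open: use empty intervals, or the intervals $(-n,n)$. Hence if $C$ is $D$-closed, every level set is $D$-open, and $\mathbb{1}_{C}$ is $D$-usco. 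Conversely, if $\mathbb{1}_{C}$ is $D$-usco, take $y=\frac12$: the level set is $C^{c}$, which is then $D$-open, so $C$ is $D$-closed. The only point needing care is that $\mathbb{1}_{C}$ exists as a function $\R\di\R$ at all. This is where $\ACAo$ is used: from the representation $F_{C}$ we define $\mathbb{1}_{C}(x)=1$ if $F_{C}(x)>0$ and $0$ otherwise, using $(\exists^{2})$, and we adjust for the convention in Definition \ref{char} that membership is positivity of the representation function.

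For the second item, the backward direction is the same as before. If $F_{C}:[0,1]\di\R$ is a $D$-usco representation function for $C$, then $\{x: F_{C}(x)<\frac{1}{2^{k}}\}$ is $D$-open for each $k$. However, $C^{c}=\{x:F_{C}(x)=0\}$ is not literally a level set, so I would choose the convention making $C$ itself the zero set of $F_{C}$. The idea is to work with the complement: write $C^{c}=\cap_{k}$ or $\cup_{k}$ of such level sets as appropriate, and in the intended reading $C=\{F_{C}\le 0\}$ and $C^{c}=\{F_{C}<0\}$ after the shift $F_{C}\mapsto -F_{C}$ or similar. I would fix the sign convention so that the complement of $C$ is exactly one level set $\{F_{C}<y\}$, which is then $D$-open.

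For the forward direction, no $(\exists^{2})$ is available, so $\mathbb{1}_{C}$ cannot be used, and I need a representation function built from the $D$-code $C^{c}\cap[0,1]=\cup_{n}J_{n}$. The plan is to mimic the second-order construction of a continuous distance-like function from a code, as in \cite{simpson2}*{II.7.1}. Each $J_{n}$ is either an interval $(a,b)$ or $(a,c)\cup(c',b)$, possibly with $c=c'$. For each $J_{n}$ I would use a tent-like function $g_{n}$ that is positive exactly on $J_{n}$. In the interval case this is $g_{n}(x)=\max(0,\min(x-a,b-x))$. The difficulty is that we cannot decide which case holds. So I would instead take $g_{n}$ lower semi-continuous, positive exactly on $J_{n}$, and define $F=\sum_{n}2^{-n}\min(g_{n},1)$, so that $F_{C}$ is positive off $C$ (after flipping signs to match conventions). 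Then level sets $\{F<y\}$ must be shown $D$-open, which is awkward for a sum.

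A cleaner route is to use the characteristic function of $C^{c}$ restricted appropriately and compute its values via $\WKL_{0}$-compactness on $[0,1]$. I expect this step, producing a usable representation function and verifying that each level set is again a countable union of intervals or pairs of intervals without deciding which case holds, to be the main obstacle. My first attempt would be to define $F$ as the sum of indicator functions weighted by $2^{-n}$, evaluated as a real via the fact that membership in each $J_{n}$ is $\Sigma^{0}_{1}$ in $x$. The level sets would then be expressed as unions over finite subfamilies of $J_{n}$'s of their intersections, each of which is again of the required form. I would use $\WKL_{0}$ only to guarantee that the resulting function is total and well defined on $[0,1]$.
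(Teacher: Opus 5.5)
Your first item is correct and is the paper's argument; your case boundaries ($y\leq 0$, $0<y\leq 1$, $y>1$) are stated more carefully than the paper's, and you correctly see that $(\exists^{2})$ is needed only to define $\mathbb{1}_{C}$ from $F_{C}$. The second item, however, has a genuine gap. You never complete the forward direction, and you flag it yourself as the main obstacle. The missing idea is that over $\RCAo+\WKL_{0}$ you should not try to work \emph{without} $(\exists^{2})$. Instead, \emph{split cases} on $(\exists^{2})\vee\neg(\exists^{2})$, as the paper does. If $(\exists^{2})$ holds, $\mathbb{1}_{C}$ exists and the first item applies. If $\neg(\exists^{2})$ holds, every function $\R\di\R$ is continuous by \cite{kohlenbach2}*{Prop.\ 3.14}. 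By $\WKL_{0}$ such functions have RM-codes (\cite{dagsamXIV}*{\S2}), so by \cite{simpson2}*{II.7.1} the sets they define by strict inequalities are RM-open, hence $D$-open. So nothing has to be built from the $D$-code. By Definition \ref{char}, $C$ already comes with a representation function, which in this case is continuous and coded, so its level sets are $D$-open. The paper applies the same reasoning to a representation function of $\R\setminus C$ to get that it is RM-open. This also explains why your direct approach cannot succeed under the convention of Definition \ref{char}. A representation function $F_{C}\colon[0,1]\di\R$ of a closed $C$ other than $\emptyset$ and $[0,1]$ is positive exactly on $C$, hence discontinuous. So constructing one already yields $(\exists^{2})$.

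Concretely, your final attempt fails at three points. First, evaluating $\mathbb{1}_{J_{n}}(x)$ means deciding the $\Sigma^{0}_{1}$-property $x\in J_{n}$, which is exactly what $(\exists^{2})$ provides. Since $\sum_{n}2^{-n}\mathbb{1}_{J_{n}}$ is discontinuous in general, this function does not exist in the case $\neg(\exists^{2})$ and is superfluous in the case $(\exists^{2})$. Second, your claim that finite intersections of the $J_{n}$ are again of the form required by Definition \ref{charkar} is false: $((0,3)\setminus\{1\})\cap((0,3)\setminus\{2\})$ has three components. Third, you model $J_{n}$ as $(a,c)\cup(c',b)$ with known endpoints. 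But then there would be nothing to decide, since such a $D$-code is already an RM-code. The point of the $D$-representation is that $J_{n}$ is given only as a set, without its endpoints (compare the set $B$ from \eqref{poil}). As for the backward direction, you rightly note that $\R\setminus C=\{F_{C}=0\}$ is only an intersection of level sets, which is a fair remark about the formulation. But switching the sign convention changes the statement rather than proving it. The paper handles that direction inside the same case distinction, using $\mathbb{1}_{C}$ in one case and continuity plus RM-codes in the other.
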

\begin{proof}
For the first item, the level set $\{ x\in \R: \mathbb{1}_{C}(x)<y \}$ is either $\emptyset$, $C$, or $\R$ depending on whether $y<1$, $0\leq y \leq 1$, or $y>1$.  
Since $\emptyset, \R$ are even RM-closed, the first item follows.  
For the second item, fix $C\subset [0,1]$ and invoke the law of excluded middle as in $(\exists^{2})\vee \neg(\exists^{2})$.  
In case $(\exists^{2})$, define the characteristic function $\mathbb{1}_{C}$ and note that it is a $D$-usco representation function iff $C$ is $D$-closed. 
In case $\neg (\exists^{2})$, all $\R\di \R$-functions are continuous by \cite{kohlenbach2}*{Prop.\ 3.14}.  Hence, any representation function $F_O$ for $O=\R\setminus C$ is continuous.
By \cite{dagsamXIV}*{\S2}, the function $F_{O}$ has an RM-code assuming $\WKL_{0}$, implying that $O$ is RM-open by \cite{simpson2}*{II.7.1}.  
\end{proof}
We only need $(\exists^{2})$ in the previous theorem to guarantee that the characteristic function exists.  An general approach that works over $\RCA_{0}$ is explored in \cite{samBOOK}.

\smallskip

Next, we now establish equivalences for other basic properties of $D$-open sets. 
We note that item \eqref{waja} boasts many equivalences involving Fourier analysis already (\cites{samBIG2, samBIG3, samBOOK}). 
We could have adapted some the proofs from \cite{samcie26, samBOOK} but choose to provide direct proofs.
\begin{thm}[$\ACAo+\QFAC^{0,1}$]\label{trefor}
The following are equivalent. 
\begin{enumerate}
\renewcommand{\theenumi}{\alph{enumi}}
\item The enumeration principle $\cocode_{0}$: for any countable set $A\subset [0,1]$, there is a sequence of reals that includes all elements of $A$.\label{waja} 
\item The principle $\open_{D}$: for every $D$-open $O\subset \R$, there is a sequence of open intervals $(I_{n})_{n\in \N}$ such that $O=\cup_{n\in \N}I_{n}$.\label{wajb} 
\item For every $D$-closed $C\subset \R$, the distance function $\lambda x.d(x, C)$ exists.\label{wajb2} 
\item For every $D$-closed $C\subset \R$, the supremum $\sup (C\cap [p, q])$ exists as a sequence with $p, q\in \Q$.\label{wajb3} 
\item The perfect set theorem for $D$-closed sets:  \emph{any non-enumerable $D$-closed set has a $D$-closed and perfect subset}.\label{peffie}
\item For any set $X\subset \R$ that is ${\bf F}_{\sigma}^{D}$, the closure $\overline{X}$ exists.\label{salmuw}
\item The supremum principle for $D$-usco functions: for $D$-usco $f:[0,1]\di \R$, the reals $\sup_{x\in [p, q]}f(x)$ exist as a sequence over $\Q\cap [0,1]$.\label{wajc2}  
\item  Any $D$-usco function on the unit interval is Baire 1.\label{wajc4}  
\item The maximum principle: for $D$-usco functions $f:\R\di \R$, there is a sequence $(x_{p, q})_{p, q\in \Q }$ with $(\forall y\in [p, q])(f(y)\leq f(x_{p, q}))$ for any $p, q\in \Q$.\label{wajc3}  
\item The supremum principle for continuous functions:  for $D$-closed $C\subset [0,1]$ and $f:[0,1]\di \R$ continuous on $C$, the reals $\sup_{x\in C\cap [p,q]}f(x)$ exist as a sequence over $\Q\cap [0,1]$. \label{wajc}  
\item The supremum principle for $\Gamma$-functions on $D$-closed sets in the unit interval, where $\Gamma$ is between the classes of continuous and $D$-usco functions.\label{wartaal} 
\item \(Urysohn\) For $D$-closed disjoint $C_{0}, C_{1}\subseteq \R$, there is continuous $g:\R\di [0,1]$ such that $x\in C_{i}\asa g(x)=i$ for any $x\in \R$ and $i\in \{0,1\}$.\label{wajd}
\item The combination of the following.  \label{wajd2}
\begin{itemize}
\item[(m.1)] \(Tietze\) For $D$-closed $C\subset \R$ and $f:\R\di [0,1]$ continuous on $C$, there is continuous $g:\R\di \R$ such that $f=g$ on $C$. 
\item[(m.2)] The principle $\open_{D}^{\dagger}$: every $D$-open set is ${\bf F}_{\sigma}^{D}$. 
\end{itemize}
\item The combination of the following.\label{helavista2}
\begin{itemize}
\item[(n.1)] The supremum principle for bounded $D$-Baire 1 functions on $[0,1]$.
\item[(n.2)] The principle $\open^{\dagger}_{D}$:  every open set of reals is ${\bf F}_{\sigma}^{D}$.  
\end{itemize}
\item The combination of the following.\label{helavista3}
\begin{itemize}
\item[(o.1)] \(Lebesgue, \cite{veellebf}\) Any $D$-Baire 1 function on $[0,1]$ is Baire 1.
\item[(o.2)] The principle $\open^{\dagger}_{D}$:  every open set of reals is ${\bf F}_{\sigma}^{D}$.  
\end{itemize}
\item  \(Lindel\"of\) For $D$-closed $C\subset \R$ and continuous $\Psi: \R\di \R$ that is non-negative on $C$, there is a sequence $(x_{n})_{n\in \N}$ in $C$ such that $C\subset \cup_{n\in \N}B(x_{n}, \Psi(x_{n}))$.\label{blindeloefz}  
\end{enumerate}
We only use $\QFAC^{0,1}$ to establish $\eqref{waja}\di \eqref{wajc3}, \eqref{wajd2}$. 
\end{thm}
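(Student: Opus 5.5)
The plan is to organise everything around the hub $\eqref{waja}\asa\eqref{wajb}$, with a single coding trick for the reversals. For $\eqref{waja}\di\eqref{wajb}$, let $O=\cup_{n}J_{n}$ be $D$-open. As in the proof of Theorem \ref{zefrre}, put $I_{n}:=\INT(\overline{J_{n}})$, which exists by Theorem \ref{talkingaboud}. Each $E_{n}:=I_{n}\setminus J_{n}$ is then at most a singleton. The set $E:=\cup_{n}E_{n}$ is countable: map $x\in E$ to the least $n$ with $x\in E_{n}$, which is arithmetical and hence definable with $(\exists^{2})$. By $\cocode_{0}$, applied after a homeomorphism into $[0,1]$, there is a sequence $(e_{k})$ listing $E$. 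With $(\exists^{2})$ we decide whether $e_{k}\in E_{n}$, and then cut $I_{n}$ at the relevant $e_{k}$. This writes $J_{n}$ as at most two honest open intervals with explicit endpoints, which gives $\open_{D}$.

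For the reversal $\eqref{wajb}\di\eqref{waja}$, fix countable $A\subset[0,1]$ with injection $Y$. Using $(\exists^{2})$, define
\[
J_{n,m}:=\big(q_{m}-\tfrac{1}{2^{n}},\,q_{m}+\tfrac{1}{2^{n}}\big)\setminus\{x\in A:Y(x)=n\}.
\]
Since $Y$ is injective on $A$, the removed set is at most a singleton, so each $J_{n,m}$ is an interval or a union of two intervals. This makes $O:=\cup_{n,m}J_{n,m}$ $D$-open; to guarantee that $O$ misses $A$ exactly, I would shift $A$ into a separate copy of $[0,1]$, as in the proof of Theorem \ref{justnin}. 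The whole design should let us recover each point $x$ with $Y(x)=n$ arithmetically from any RM-code of a suitable $D$-open set. The cleanest way to arrange this is to take the single open set $(n,n+1)\setminus\{n+x\}$ on the $n$-th block. Given a code $(a_{k},b_{k})$ for it, the missing point is the unique $y\in(n,n+1)$ not covered by the code. That point is the infimum of the endpoints $a_{k}>n$, or equivalently the supremum of the $b_{k}<n+1$, which $(\exists^{2})$ computes. Whether $n\in\ran(Y)$ is arithmetical as well: it holds iff the code fails to cover all rationals-plus-limits, which can be checked via the endpoints. Together these yield the enumeration. The same block coding drives the remaining reversals. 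For \eqref{wajb2}, the distance function to the $D$-closed set $\cup_{n}\{n+x_{n}\}$ locates the points. For \eqref{wajb3}, take suprema over $[n,n+1]$. For \eqref{salmuw}, the closure of the ${\bf F}_{\sigma}^{D}$ union $\cup_{n}\{n+x_{n}\}$ works as in Theorem \ref{justnin}. For \eqref{wajc2}, \eqref{wajc3}, \eqref{wajc} and \eqref{wartaal}, use $\mathbb{1}_{C}$, which is $D$-usco, together with continuous $f$ such as $f(x)=x$ on $C$, and read off suprema. Urysohn \eqref{wajd} gives a continuous $g$ whose zero set is $C_{0}$; zero sets of continuous functions have RM-codes in $\ACAo$, which lets us locate the points. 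For \eqref{peffie}, apply the perfect set theorem to the set $C$ minus the points: if $\cocode_{0}$ fails for $A$, then $C$ is non-enumerable, yet the $D$-closed perfect subset would be RM-closed by Theorem \ref{zefrre}. I expect this to produce a contradiction with countability through a Baire-category argument, though this is the delicate case. For \eqref{blindeloefz}, use $\Psi$ small near the blocks so that the sequence must list $C$.

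The forward directions all reduce to $\open_{D}$ once we have the RM-code. Closed sets with codes admit distance functions and suprema in $\ACAo$, as in Theorem \ref{talkingaboud}. The standard second-order proofs handle Urysohn and Tietze over codes. The usco cases convert level sets to codes, using $\QFAC^{0,1}$ to choose the codes uniformly for the maximum principle and Tietze. The perfect set theorem for codes follows from $\cocode_{0}$ plus $\ATR_{0}$, which we get from Theorem \ref{zefrre} together with \cite{simpson2}*{V.5.5}. The main obstacle is the reversal from the perfect set theorem \eqref{peffie}, and more generally making the uncountability argument work inside $\ACAo$. There I would first attempt to mimic the proof of Theorem \ref{justnin}, using $\QFAC^{0,1}$ to choose witnesses.
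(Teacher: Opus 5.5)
Your hub \eqref{waja}$\asa$\eqref{wajb} and most of the block-coding reversals are in line with the paper, but the proposal has genuine gaps. The reversal \eqref{peffie}$\di$\eqref{waja}, which you flag as the main obstacle, is left open, and the route you sketch cannot succeed. You propose to refute ``$P$ is a nonempty perfect set with an injection into $\N$'' by Baire category or by mimicking Theorem \ref{justnin}. Such an argument, applied to $P=[0,1]$, would prove $\NIN_{[0,1]}$, which is unprovable even in $\Z_{2}^{\omega}+\QFAC^{0,1}$. Invoking $\QFAC^{0,1}$ there would also contradict the last sentence of the theorem. The missing observation is elementary. Your coding set has at most one point per block, like $B$ from \eqref{poil}, so it consists of isolated points and has no nonempty perfect subset. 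The contrapositive of \eqref{peffie} then says outright that it is enumerable, and an enumeration of $A$ is read off with $(\exists^{2})$. In the forward direction \eqref{waja}$\di$\eqref{peffie}, your justification for $\ATR_{0}$ is circular: Theorem \ref{zefrre} plus \cite{simpson2}*{V.5.5} derives $\ATR_{0}$ \emph{from} \eqref{peffie}. What you need instead is that $\ACAo+\cocode_{0}$ proves $\ATR_{0}$. For example, $\cocode_{0}$ yields `at most one' comprehension for arithmetical formulas, which gives $\ATR_{0}$ by \cite{simpson2}*{V.5.2}, as in the proof of Theorem \ref{moare}.

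Further, items \eqref{wajc4}, \eqref{wajd2}, \eqref{helavista2} and \eqref{helavista3} are not treated in either direction, and your blanket claims do not cover them. An RM-code alone does not give (o.1). One has to run Lebesgue's argument for codes: Baire 1 is closed under uniform limits; characteristic functions of ambivalent sets are Baire 1 via distance functions; and a Borel class-1 function is a uniform limit of step functions over disjointified ambivalent sets. Similarly, \eqref{wajc4} needs the continuous approximations $f_{n}(x)=\sup_{y}(f(y)-n|x-y|)$, and (m.2) needs $O=\cup_{k}\{x: d(x,\R\setminus O)\geq 2^{-k}\}$.

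Backwards, the block coding fails for \eqref{wajd2}. Unlike \eqref{wajd}, Tietze has no `iff' and only constrains $g$ on $C$, so on a discrete coding set it carries no information. This is why (m.2) is included: write $O=\cup_{n}F_{n}$ with $F_{n}$ $D$-closed, extend $0$ on $[0,1]\setminus O$ and $2^{-n}$ on $F_{n}$, and sum the truncated extensions to get a continuous $g$ with $O=\{g>0\}$. The reversals of \eqref{wajc4}, \eqref{helavista2} and \eqref{helavista3} are easy but must be stated. They go via $\mathbb{1}_{C}$, using (n.2) resp.\ (o.2) to see that it is $D$-Baire 1, together with the $\WKL_{0}$-provable supremum principle for Baire 1 functions where needed.

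Two smaller slips. In \eqref{wajb}$\di$\eqref{waja}, the missing point is not $\inf\{a_{k}:a_{k}>n\}$, since the intervals to its left have left endpoints accumulating at $n$. Take instead the supremum of the rationals $q$ such that every $[p,q]$ with $n<p\leq q$ is covered by finitely many code intervals, which is arithmetical by Heine--Borel. Alternatively, just extract $\range_{0}$ and use $\range_{0}\asa\cocode_{0}$. Second, for \eqref{salmuw}, the closure of $\cup_{n}\{n+x_{n}\}$ is that set itself and yields nothing. You need the rational translates from Theorems \ref{justnin} and \ref{moare}.
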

\begin{proof}
First of all, $\cocode_{0}$ is equivalent to the following items (\cite{dagsamX, dagsamXI}).
\begin{itemize}
\item The statement $\range_{0}$ which expresses that for a countable set $A\subset [0,1]$ and $Y:[0,1]\di \N$ injective on $A$, the range of the latter on the former exists, i.e.\ there is $X\subset \N$ with $n\in X\asa (\exists x\in A)(Y(x)=n)$ for all $n\in \N$ 
\item For any arithmetical $\varphi$ such that $(\forall n\in \N)(\exists \textup{ at most one } f\in 2^{\N})\varphi(n, f)$, there is $Z\subset \N$ such that $n\in Z\asa (\exists f\in 2^{\N})\varphi(n, f)$, for all $n\in \N$. 
\end{itemize}
Hence, by the proof of Theorem \ref{moare}, item \eqref{salmuw} implies $\cocode_{0}$.    
Moreover, the former proof is readily modified to show that $\open_{D}$ implies item \eqref{salmuw}.  
The equivalence $\cocode_{0}\asa \open_{D}$ is proved below.  One readily shows that $\open_{D}$ follows from its restriction to the unit interval.  

\smallskip

Secondly, assume item \eqref{waja} and fix an open $O\subset \R$ with $D$-representation $(J_{n})_{n\in \N}$
By Theorem \ref{talkingaboud}, the closure $\overline{J_{n}}$ exists and $\overline{J_{n}}\setminus J_{n}$ is finite (with at most four elements).  
Hence, we can enumerate the union $\cup_{n\in \N}\big(\overline{J_{n}}\setminus J_{n} \big)$ using $\cocode_{0}$, which immediately yields an RM-code for $O$. 
Indeed, we can split $J_{n}$ in its constituent intervals given the end-points of the latter.  Hence, item \eqref{wajb} follows. 

\smallskip

Thirdly, assume item \eqref{wajb}, fix $A\subset [0,1]$ and $Y:\R\di \N$ with the latter being injective on the former.  
Now define the closed set $B\subset \R$ as follows: $x\in B$ if
\be\label{poil}
(\exists n\in \N)[ x \in [2n, 2n+1]\wedge x-2n \in A \wedge Y(x-2n)=n  ].  
\ee
Hence, each unit interval contains at most one element of $B$, i.e.\ the $D$-representation of $O:=\R\setminus B $ is readily defined.  
Suppose $O=\cup_{n\in \N}(a_{n}, b_{n})$ and note that 
\begin{align}
(\exists x\in A)(Y(x)=n)&\asa \cup_{n\in \N}(a_{n}, b_{n}) \textup{ does not cover $[2n, 2n+1]$} \notag \\
&\asa (\forall m\in \N)(\cup_{n\leq m}(a_{n}, b_{n}) \textup{ does not cover $[2n, 2n+1]$}.\label{woeif}
\end{align}
Note that the formula in \eqref{woeif} arithmetical, i.e.\ we can define the range of $Y$ on $A$, and hence $\cocode_{0}$ follows by the previous paragraph.

\smallskip

Fourth, item \eqref{wajb2} follows from item \eqref{wajb} as RM-closed sets have a distance function assuming $\ACA_{0}$ (\cite{withgusto, browner}).  
To obtain $\cocode_{0}$, consider the $D$-closed set $B$ defined by \eqref{poil}.  Essentially by definition, we have that
\[\textstyle
(\exists x\in A)(Y(x)=n)\asa d( 2n+\frac{1}{2}, B)<\frac{1}{2}. 
\]
Since the right-hand side is arithmetical, we obtain $\range_{0}$ as required.  
A similar proof yields the equivalence between $\cocode_{0}$ and item \eqref{wajb3}, noting that $\ACA_{0}$ proves the existence of the 
supremum for RM-closed sets of reals (\cite{simpson2}*{IV.2.11}).  

\smallskip
  
Fifth, for \eqref{waja} $\di$ \eqref{peffie}, $\ACAo+\cocode_{0}$ proves $\ATR_{0}$ (\cite{dagsamXI}) while the latter is equivalent to the second-order perfect set theorem (\cite{simpson2}*{I.11.5}).  
To prove item \eqref{peffie}, use $\open_{D}$ to convert a given non-enumerable $D$-closed set to an RM-code, apply the second-order perfect set theorem, and note that an RM-closed set is also $D$-closed (by definition).  
For the reversal,  fix $A\subset [0,1]$ and $Y:[0,1]\di \R$ with the latter injective on the former.
Consider the set $B$ from \eqref{poil}, which is $D$-closed but clearly has no perfect subset as every point is isolated.  By the contraposition of item \eqref{peffie}, $B$ can be enumerated, as required for $\cocode_{0}$. 

\smallskip

Sixth, to prove the supremum principle as in item \eqref{wajc2} from $\open_{D}$, fix $D$-usco $f:[0,1]\di \R$ and consider the level sets $L_{q}:=\{x\in [0,1]:f(x) \geq q\}$ for $q\in \Q$, which are $D$-closed by assumption.  
Use $\open_{D}$ to obtain a sequence of open intervals $(I_{n, q})_{n\in \N, q\in \Q}$ such that $[0,1]\setminus L_{q}=\cup_{n\in \N}I_{n, q}$.  
To show that $f$ is bounded on $[0,1]$, suppose $(\forall n\in \Q)(\exists x\in [0,1])( x\in L_{n})$.  Since the latter sets are RM-closed, there is a sequence $(x_{n})_{n\in \N}$ with $x_{n}\in L_{n}$ for each $n\in \N$ by \cite{simpson2}*{IV.1.8}.   As $\ACA_{0}$ proves the sequential compactness of $[0,1]$ (\cite{simpson2}*{III.2}), there is a convergent sub-sequence, say with limit $y\in [0,1]$.  Clearly, $f$ is not $D$-usco at $y$, a contradiction, and $f$ is bounded on $[0,1]$.  Next, we also have have the following  
\be\textstyle\label{texzow}
L_{q}=\emptyset \asa [0,1]\subseteq \cup_{n\in \N}I_{n ,q}\asa (\exists n_{0}\in \N)([0,1] \subseteq \cup_{n\leq n_{0}} I_{n, q}),
\ee
where the final equivalence is due to the countable Heine-Borel theorem, provable in $\WKL_{0}$ (\cite{simpson2}*{IV.1}). 
However, the right-hand side of \eqref{texzow} is decidable using $(\exists^{2})$.  The usual interval-halving technique (also using $(\exists^{2})$) now readily yields $\sup_{x\in [0,1]}f(x)$, and the same proof goes through for intervals $[p, q]$ for $p, q\in \Q$.   

\smallskip

For item \eqref{wajc4}, the supremum principle for Baire 1 functions is provable in $\WKL_{0}$ (\cite{dagsamXIV}*{\S2}), i.e.\ the former item implies item \eqref{wajc2}.  
To obtain the reversal, fix $D$-usco $f:[0,1]\di \R$ define the continuous function $f_{n}:[0,1]\di \R$ as follows: 
\be\label{zaikin}\textstyle
f_{n}(x):= \sup_{y\in [0,1]}  (f(y)- n\cdot |x-y| ),
\ee
where the supremum is defined by first restricting $x$ to the rationals and then taking limits.  
Essentially, the sequence $(f_{n}(q))_{q\in \Q\cap [0,1]}$ yields a code for a continuous function. 
Clearly, $f$ is the pointwise limit of $(f_{n})_{n\in \N}$ as required for item~\eqref{wajc4}.

\smallskip

To obtain $\open_{D}$ from item \eqref{wajc2}, let $O\subset [0,1]$ be $D$-open and consider $f(x)=\mathbb{1}_{C}(x)$ where $C$ is $[0,1]\setminus O$.
Clearly, $f$ is $D$-usco and note that $[p, q]\subseteq O\asa \sup_{x\in [p, q]}f(x)=_{\R}0$ for all $p, q\in \Q\cap [0,1]$.   
Since the right-hand side is decidable using $(\exists^{2})$, an RM-code of $O$ is readily defined and $\open_{D}$ follows.  Clearly, item \eqref{wajc3} implies item \eqref{wajc2} and to obtain the former,   
observe that the definition of supremum implies $(\forall k\in \N)(\exists y\in [0,1])(f(y)\geq \sup_{x\in [0,1]}f(x)-\frac{1}{2^{k}} )$.  Apply $\QFAC^{0,1}$ and let $(y_{n})_{n\in \N}$ be the resulting sequence. 
As $\ACAo$ proves the sequential compactness of $[0,1]$, the latter sequence has a convergent sub-sequence, say with limit $z\in [0,1]$.  Then $f(y)\leq f(z)$ for all $y\in[0,1]$ as required, and the general case readily follows.  

\smallskip

Seventh, item \eqref{wajc} immediately implies item \eqref{wajb3}.  For the reversal, fix $f, C$ as in the latter and assume $\open_{D}$.  Now recall that RM-closed sets are separably closed in $\ACA_{0}$ by \cite{browner2}*{Theorem 3.3}.  Hence, there is a sequence $(x_{n})_{n\in \N}$ such that $x\in C\asa (\forall k\in \N)(\exists m\in \N)(|x-x_{m}|<\frac{1}{2^{k}})$.  
The continuity of $f$ on $C$ and a case distinction (based on whether a real is an isolated point of $C$ or not) then yields
\[
(\exists x\in C)(f(x)>q)\asa (\exists n\in \N)(f(x_{n})>q)
\]
which readily yields the required supremum.  

\smallskip
 
Eighth, to prove item \eqref{wartaal} for $\Gamma$ equal to the $D$-usco functions, observe that the union of two $D$-open sets is again $D$-open.
Hence, $C\cap L_{q}$ is $D$-closed if $C$ is $D$-closed and $L_{q}:=\{x\in [0,1]:f(x) \geq q\}$ is the level set of the D-usco function $f:[0,1]\di \R$. The proof is now analogous to that of item \eqref{wajc2}.
  
\smallskip 

Ninth, the Urysohn lemma for RM-codes is provable in $\RCA_{0}$ (\cite{simpson2}*{II.7}).  Hence, item \eqref{wajd} is immediate from $\open_{D}$.   
To obtain $\cocode_{0}$ from the Urysohn lemma as in item \eqref{wajd}, fix countable $A\subset [0,1]$ and consider the associated $D$-closed set $B$ defined from \eqref{poil}.
Put $C_{1}=B$ and $C_{0}=\emptyset$ and let $g:\R\di \R$ be the continuous function satisfying $g(x)=i\asa x\in C_{i}$ for $i=0,1$.  
By \cite{kohlenbach2}*{\S3}, $(\exists^{2})$ suffices to define the supremum as in the right-hand side of \eqref{zeazy} as follows:
\be\textstyle\label{zeazy}
(\exists x\in A)(Y(x)=n)\asa \notag [B\cap [2n, 2n+1]\ne \emptyset] \asa[ \sup_{y\in [2n, 2n+1]}g(y)=_{\R}1].
\ee
Note that the final equivalence of \eqref{zeazy} makes use the fact that the maximum principle for continuous functions is provable in $\WKL_{0}$, 
for coded and non-coded functions (see \cite{simpson2}*{IV.2} and \cite{dagsamXIV}*{\S2}).  Clearly, the right-hand side of \eqref{zeazy} is arithmetical, and $\range_{0}$ follows. 
Alternatively, one could modify the proof of \cite{samcie26}*{Theorem 17} to $D$-open sets, but the above proof seems more elegant. 
%

\smallskip

Next, the equivalence involving the Tietze extension theorem is based on the proofs in \cite{samcie26, samBOOK} and we only provide a sketch.  
Now, to prove (m.1), one can use Hausdorff's construction from \cite{huisdorp}, where $f$ is continuous on the closed set $C$:
\be\label{shartcot}
g(x):=
\begin{cases}
f(x)  & x\in C \\
\inf_{y\in C}\big( f(y)+\frac{|x-y|}{d(x, C)}-1\big) & x\not\in C
\end{cases}.
\ee
The infimum and distance function in \eqref{shartcot} are well-defined assuming $\open_{D}$.  
Alternatively, the latter provides an RM-code for the closed set $C$, which yields a dense sub-sequence $(x_{n})_{n\in \N}$ of $C$ by \cite{browner2}*{Theorem 3.3} .
The function $f$ on $C$ can be represented by $(f(x_{n}))_{n\in \N}$ which readily yields the necessary modulus of continuity.  The second-order proof from \cite{simpson2}*{II.7} now goes through.   
To prove $\open^{\dagger}_{D}$ from $\open_{D}$, recall that the distance function $d(x, C)$ exists for RM-closed sets $C\subset \R$, say in $\ACAo$ (\cite{withgusto, browner2}).  
For $D$-open $O\subset \R$, observe that $O=\cup_{n\in \N} F_{n}$ for the closed set $F_{n}:=\{x\in \R : d(x, \R\setminus O)\geq \frac{1}{2^{k}} \}$.  
By \cite{simpson2}*{II.7.1}, $F_{n}$ is RM-closed and hence $D$-closed, as required.  

\smallskip

To show that $\open_{D}$ follows from item \eqref{wajd2}, let $O\subset [0,1]$ be $D$-open and suppose $O=\cup_{n\in \N}F_{n}$ where each $F_{n}$ is $D$-closed using $\open^{\dagger}_{D}$.  
Define $F$ as $ [0,1]\setminus O$ and define $f_{n}(x)$ as $0$ if $x\in F$ and $\frac{1}{2^{n}}$ if $x\in F_{n}$.  Then $f_{n}:[0,1]\di \R$ is continuous on $F\cup F_{n}$ and let $g_{n}$ be the continuous extension\footnote{Technically, one needs to shift $f_{n}$ and $F\cup F_{n}$ to $[n+1, n+2]$ to observe that the Tietze theorem for $\R$ implies a sequential version of the Tietze theorem on $[0,1]$.} of $f_{n}$ to $[0,1]$ provided by the Tietze extension theorem.    
Use the $M$-test of Weierstrass (provable in $\RCA_{0}$ by \cite{simpson2}*{II.6.5}) to show that $g(x):=\sum_{n=0}^{\infty}g_{n}(x)$ exists and is continuous, rescaling $g_{n}$ if necessary.  
Then $g$ satisfies $x\in O\asa g(x)>0$ and one readily defines a code for $O$ using $g$.  

\smallskip

Next, to prove item \eqref{helavista2} from $\open_{D}$, fix a $D$-Baire 1 function $f:[0,1]\di [0,1]$.  
Now consider the sequence of ${\bf F}^{D}_{\sigma}$-sets $f^{-1}(E_{q})$ where $E_{q}:=(q, +\infty)$ and $q\in \Q$.  
Let $(I_{n, m, q})_{n, m\in \N, q\in \Q}$ be a sequence of intervals in $[0,1]$ provided by $\open_{D}$ such that for all $q\in \Q$, we have $f^{-1}(E_{q})=\cup_{n\in \N}  C_{n, q}$ with $C_{n, q}$ the complement of the RM-open set $O_{n, q}:= \cup_{m\in \N}I_{n,m, q}$ in the unit interval.   Then we have
\begin{align*}
(\forall x\in [0,1])(f(x)\leq q)\asa f^{-1}(E_{q})=\emptyset  &\asa (\forall n\in \N)(C_{n, q}=\emptyset)\\
&\asa (\forall n\in \N)([0,1]\subset \cup_{m\in \N}I_{n,m,q} )\notag\\
&\asa (\forall n\in \N)(\exists m_{0}\in \N)([0,1]\subset \cup_{m\leq m_{0}}I_{n,m,q} ),\notag
\end{align*}
where we used the (second-order) Heine-Borel theorem in the final step.
Since the final formula in the previous equation is (equivalent to) arithmetical, the usual interval-halving technique using $(\exists^{2})$ yields $\sup_{x\in [0,1]}f(x)$; the modification to intervals with rational end-points is straightforward.  
For the reversal, assume item~\eqref{helavista2}, let $C\subset \R$ be $D$-closed, and note that $\mathbb{1}_{C}$ is $D$-Baire 1.  
Indeed, a case distinction shows that for any $D$-open $O\subset \R$, the set $\mathbb{1}_{C}^{-1}(O)$ is either $\emptyset, \R, C$, or $\R\setminus C$.  
Since $\open^{\dagger}_{D}$ is given, each of these is ${\bf F}_{\sigma}^{D}$.  As above, the supremum provided by item \eqref{helavista2} yields an RM-code for $C$ as $[p, q]\subset C^{c}\asa [0=_{\R} \sup_{x\in [p, q]}\mathbb{1}_{C}(x)]$ for any $p, q\in \Q\cap [0,1]$, where the right-hand side is decidable in $\ACAo$.  
%
%
%
%

\smallskip

Next, for item \eqref{helavista3}, it suffices to prove (o.1) from $\open_{D}$, which follows by modifying the proof of \cite{overderooie}*{Theorem 11.12} for RM-codes where necessary.  
The latter proof consists of the following steps.  
\begin{enumerate}
\renewcommand{\theenumi}{\roman{enumi}}
\item The class of Baire 1 functions is closed under \emph{uniform} limits.\label{jitem1}
\item The characteristic functions of ambivalent, i.e.\ ${\bf F}_{\sigma}\cap {\bf G}_{\delta}$, sets are Baire 1.\label{jitem3}  
\item A Borel measurable function of class $1$ is the uniform limit of finite sums of characteristic functions of ambivalent sets.\label{jitem2}  
\end{enumerate}
The proof of item \eqref{jitem1} in \cite{overderooie}*{Theorem 11.7} is straightforward and only really makes use of the $M$-test of Weierstrass, which is provable in $\ACAo$ via the usual proof.  

\smallskip

To prove item \eqref{jitem3} for RM-codes, let $X\subset \R$ be ambivalent, i.e.\ there are increasing sequences $(C_{n})_{n\in \N}$ and $(P_{m})_{m\in \N}$ of closed sets with $X=\cup_{n\in \N}C_{n}$ and $\R\setminus X=\cup_{m\in \N}P_{m}$.  Define the continuous function $f_{n}(x):=\frac{d(x, P_{n})}{d(x, P_{n})+d(x, C_{n})}$, noting that the distance function exists for RM-codes in $\ACA_{0}$ (\cite{withgusto, browner}). 
Clearly, $\mathbb{1}_{X}(x)$ equals $\lim_{n\di \infty}f_{n}(x)$ for all $x\in \R$, as required. 

\smallskip

To prove item \eqref{jitem2}, let $f:\R\di [0,1]$ be Borel measurable of class $1$ and consider the following sets, which are $\bf F_{\sigma}$ by definition and where $i\leq  2^{k}$:
\[\textstyle
E_{i}:= \{x\in \R:  \frac{i-1}{2^{k}}\leq f(x)\leq \frac{i+1}{2^{k}} \}.
\]
Clearly, $\cup_{i\leq k}E_{i}$ covers $\R$ but the sets are not pairwise disjoint.      
Now suppose there are RM-closed $(C_{n, i})_{n\in \N}$ with $E_{i}=\cup_{n\in \N}C_{n, i}$.  
Let $\pi$ be a standard pairing function and define the RM-closed set $D_{n, i}:= C_{n, i}\setminus \bigcup_{\textup{all $(m, j)$ with }\pi(m, j)<\pi(n, i)} C_{m, j} $. 
Then $F_{i}=\cup_{n\in \N}D_{n, i}$ satisfies $F_{i}\subset E_{i}$ for all $i\leq 2^{k}$ and the $F_{i}$ are pairwise disjoint by definition.  
Moreover, $\cup_{i\leq 2^{k}}F_{i}$ still covers $\R$, i.e.\ the $F_{i}$ are also ambivalent.    
Now define $g_{k}(x):=\sum_{i\leq 2^{k+1}}\frac{i}{2^{k+1}}\mathbb{1}_{F_{i}}(x)$, which is Baire 1 by item \eqref{jitem3}. 
By definition, we have $|f(x)-g_{k}(x)|<\frac{1}{2^{k+1}}$ for all $x\in \R$, i.e.\ $f$ is the uniform limit of $(g_{k})_{k\in \N}$ and hence the former is Baire 1 by item \eqref{jitem1}.

\smallskip

Finally, item \eqref{blindeloefz} implies $\range_{0}$ by considering the $D$-closed set $B$ as defined using \eqref{poil}.  
Indeed, let $\Psi:\R\di \R$ be $0$ on $\N\cup \R^{-}$  and $\min(|x-n|, |x-(n+1)|)$ if $n<x< n+1$.  
Let $(x_{n})_{n\in\N}$ be the sequence provided by item \eqref{blindeloefz} and note that 
\be\label{zoltan}
(\exists x\in A)(Y(x)=n)\asa (\exists n\in \N)(x_{n}\in A\wedge Y(x_{n})=n).
\ee
Since the right-hand side of \eqref{zoltan} is arithmetical, $\range_{0}$ follows.  To prove item~\eqref{blindeloefz} from $\open_{D}$, observe that any dense sub-sequence of $C$ is as required for the Lindel\"of lemma as in item \eqref{blindeloefz}, as $\Psi$ is continuous.  
This sub-sequence exists for RM-closed sets in $\ACA_{0}$ by \cite{browner2}*{Theorem 3.3}.  
\end{proof}
Regarding item \eqref{helavista2}, the supremum principle for Baire 1 functions (with the usual definition) is equivalent to $\WKL_{0}$ (\cite{dagsamXIV}*{\S2}), a system much weaker than $\ATR_{0}$.  
Moreover, item \eqref{helavista2} 
seems to readily generalise to Baire $n$-functions.   
We are interested in exploring more equivalences involving $\open_{D}^{\dagger}$.  The latter is equivalent to (a version of the) the fact that $D$-usco functions are $D$-Baire 1.  

\smallskip

%
%
We finish this section with some conceptual remarks, including future work. 
%
\begin{remark}[Second-order equivalences]\label{beyatr}\rm
The observations from Remark \ref{tiritomba} of course apply to Theorem \ref{trefor}.  For instance, the equivalence between $\cocode_{0}$ and item \eqref{wajc2} of the latter can be `pushed down' to an equivalence between $\ATR_{0}$ and the supremum principle for $D$-usco functions that are also effectively Baire 2, i.e.\ given by a double limit of a double sequence of continuous functions.  
Other `effective' definitions can of course be used, as discussed in Remark \ref{tiritomba}
\end{remark}
\begin{rem}[Beyond $\ATR_{0}$]\label{beyatr2}\rm 
The system $\ACAo+\cocode_{0}$ is a conservative extension of $\ATR_{0}$ (\cite{samBOOK}), which follows by an easy modification of Hunter's conservation results from \cite{hunterphd}. 
Thus, the results in this paper exist at the level of $\ATR_{0}$ and it is a natural question if there are results at the level of the strongest Big Five system, $\FIVE$.  
While we deem this mostly `future work', we mention one example here, namely that the following centred statement is equivalent to $\cocode_{0}+\FIVE$.  
\begin{center}
\emph{The Cantor-Bendixson theorem for $D$-closed sets: any $D$-closed $C\subset \R$ is the disjoint union of a $D$-closed and perfect set $P$ and an enumerable set $S$.}
\end{center}
The crucial element in the previous statement is that the $S$ is enumerable (not just countable).  This constitutes `extra data' and it is a natural question, in the very spirit of RM, what
the status is of the `unrepresented' Cantor-Bendixson theorem, i.e.\ where the set $S$ is only assumed to be countable (=injection to $\N$).  We provide a (perhaps surprising) answer to this question in Section \ref{mummershines}, which 
also answers a question by Mummert at the 2022 RM-meeting in Paris.  
%
\end{rem}

\begin{remark}[Enumeration and representation]\rm
First of all, $\cocode_{0}$ was introduced in \cite{dagsamXI} while the slightly more general principle $\enum$ expresses: 
\begin{center}
\emph{if a set $A=\cup_{n\in \N}A_{n}$ where each $A_{n}\subset \R$ is finite, then $A$ can be enumerated.  }
\end{center}
Assuming rather weak principles in the base theory, these principles are equivalent.  Nonetheless, for the RM-study of the uncountability of the reals, 
the more general countability notion, i.e.\ `union over $\N$ of finite sets' rather than `injection to $\N$', is most fruitful/practical.  The reason is that this RM-study 
involves basic theorems of Fourier analysis and for functions of bounded variation $f:[0,1]\di \R$, the discontinuity set $D_{f}$ exists and is the union over $\N$ of finite sets, say over $\ACAo+\QFAC^{0,1}$.  
There however seems to be no way of obtaining an injection from $D_{f}$ to $\N$ in the latter system, and much stronger systems like $\Z_{2}^{\omega}+\QFAC^{0,1}$.  
  
\smallskip

Secondly, in light of the previous, one could generalise the $D$-representation to:  
\begin{center}
\emph{each $J_{n}$ from Definition \ref{charkar} equals a finite union of basic open intervals.}
\end{center}
We have chosen the more elementary option as we were interested in equivalences for $\cocode_{0}$: the latter yields $\ATR_{0}$ already when combined with $\ACAo$.   
A similar path was taken here and there in \cite{samBOOK} when the technical details became daunting.  
Nonetheless, we are interested in the most general version of the $D$-representation and its future development. 
For instance, the centred generalisation is convenient for treating finite intersections of $D$-open sets.  Nonetheless, the $D$-representation from Definition \ref{charkar} already gives rise to $\ATR_{0}$ and the same of course holds for any generalisation, following Theorem \ref{trefor}.  Also, one often needs extra induction as in $\SIND$ when dealing with finite unions.  
\end{remark}
\begin{remark}[Enumeration and representation II]\rm
The equivalence between $\ATR_{0}$ and the perfect set theorem as in item \eqref{peffie} is conceptually pleasing:  replacing the second-order representation by the $D$-representation, one obtains
a principle equivalent to $\cocode_{0}$, where the latter exists at the level of $\ATR_{0}$ assuming $\ACAo$.  There is however one cosmetic blemish: item \eqref{peffie} is formulated using \emph{non-enumerable} rather than \emph{uncountable}.  It is then a natural question what the strength is of the perfect set theorem formulated with `uncountable'.        
We provide an answer in Section~\ref{mummershines} that the reader may not entirely welcome:  without the `extra data' provided by enumerations, the perfect set theorem (and many similar statements about uncountable sets) does not imply $\ATR_{0}$. 
\end{remark}
\begin{remark}[Representations, measure and category]\label{meaca}\rm
The \emph{Baire category theorem} and \emph{Tao's pigeonhole principle} express central properties of category and measure, and boast numerous equivalences in higher-order RM (see \cite{samBIG2, samBOOK}).  
Restricted to $D$-closed sets, these principles still imply $\NIN_{[0,1]}$, i.e.\ that there is no injection from $[0,1]$ to $\N$, as one readily verifies.  
Some of the aforementioned equivalences should also go through for the restriction to $D$-closed sets and we look forward to exploring this in the future.   
\end{remark}
In conclusion, basic properties of $D$-open sets are equivalent to $\cocode_{0}$, which already boasts many equivalences (\cite{dagsamXI, samBOOK}) and implies $\ATR_{0}$ assuming $\ACAo$.  
However, the $D$-representation only seems like a small variation of the second-order representation of open sets.  By Remarks \ref{beyatr}-\ref{meaca}, lots of future work is possible.    

\subsection{On the strength of the uncountable}\label{mummershines}
In this section, we study the strength of the perfect set theorem and the Cantor-Bendixson theorem when formulated with `countable' instead of `enumerable'.  
As it turns out, these (and similar) theorems cannot yield $\ATR_{0}$ by Theorems \ref{quazulu2}.  The latter follows from Theorem \ref{quazulu}, which in turns is motivated by the following question by Carl Mummert at the 2022 RM-meeting in Paris at the University of Chicago Center.  
\begin{quote}
The principle $\neg\NIN_{[0,1]}$ clearly\footnote{In case there is an injection from $[0,1]$ to $\N$, this mapping must be discontinuous.  
The existence of a discontinuous function (on various domains) implies $(\exists^{2})$ by \cite{kohlenbach2}*{Prop.\ 3.14}.} implies $(\exists^{2})$ and hence $\ACA_{0}$; does it also imply stronger principles, like e.g.\ $\ATR_{0}$?
\end{quote}
First of all, the following theorem provides an answer to Mummert's question.  
\begin{thm}\label{quazulu}
The system $\ACAo+\neg\NIN_{[0,1]}$ cannot prove $\ATR_{0}$. 
\end{thm}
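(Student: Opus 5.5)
We plan to build a model of $\ACAo+\neg\NIN_{[0,1]}$ whose second-order part is too small to satisfy $\ATR_{0}$. Since $\ATR_{0}$ is a second-order sentence, it is enough to produce a model $\mathcal{M}$ of $\ACAo$ containing an injection $Y:[0,1]\di\N$ such that $\mathcal{M}$'s subsets of $\N$ form a countable $\omega$-model of $\ACA_{0}$ that is not a model of $\ATR_{0}$. The natural choice is $\mathcal{S}=\textup{ARITH}$, the arithmetical sets (or a hyperarithmetic-free $\omega$-model of $\ACA_{0}$). So $\textup{ARITH}$ becomes the type-1 part, and this is countable.

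The construction follows the usual recipe for such independence results in higher-order RM, as in the proofs that $\Z_{2}^{\omega}+\QFAC^{0,1}$ does not prove $\NIN_{[0,1]}$. Because the type-1 universe $\mathcal{S}\subset\N^{\N}$ is countable, there is an enumeration $(f_{n})_{n\in\N}$ of it in the metatheory. We first fix a representation of reals in $\mathcal{S}$ and choose, for each real $x\in[0,1]$ with a name in $\mathcal{S}$, the least $n$ such that $f_{n}$ is a name for a real equal to $x$. We then set $Y(x)$ to this index, which makes $Y$ extensional and injective on $[0,1]^{\mathcal{M}}$. The type-2 functionals of $\mathcal{M}$ are all functionals computable, in Kleene's S1--S9 sense, from $\exists^{2}$ together with $Y$, where $Y$ is viewed as a type-2 functional on Baire space. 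We then restrict everything to the type structure over $\mathcal{S}$: type 0 is $\N$, type 1 is $\mathcal{S}$, and type 2 is the closure of $\{\exists^{2},Y\}$ under the relevant computations, with higher types treated as in Hunter-style term models.

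The checks are as follows. (i) $\RCAo$ holds: the axioms of the base theory and $\QF$-comprehension/primitive recursion hold because we closed under computation and the terms of $\RCAo$. (ii) $(\exists^{2})$ holds since $\exists^{2}$ belongs to the model and correctly decides $\Sigma^0_1$ questions about functions in $\mathcal{S}$. (iii) $\neg\NIN_{[0,1]}$ holds because $Y$ is in the model and is injective on its reals. (iv) The model's type-1 objects are exactly $\mathcal{S}$, which is the crux. We must show that any type-1 object computable from $\exists^{2}$, $Y$, and parameters in $\mathcal{S}$ is again in $\mathcal{S}$.

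Step (iv) is the main obstacle. The plan is a restricted quantifier argument. Every query to $Y$ made during a computation is on an input $f\in\mathcal{S}$, and it returns a number that depends only on the real named by $f$. That answer is the least index of an element of $\mathcal{S}$ naming the same real. If the enumeration $(f_{n})$ is chosen arithmetically in a suitable sense, every such answer is arithmetical uniformly in $f$. Examples are an enumeration of the functions computable in $0^{(k)}$ for $k\in\N$, ordered by $k$ and then by index, or a Skolem-hull style enumeration if the honest arithmetical enumeration fails. Since $\exists^{2}$-computations over arithmetical inputs with arithmetical oracles stay arithmetical, one then shows by induction on computations that the type-1 outputs remain in $\mathcal{S}$. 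The delicate point is that "least index naming the same real" involves an equality test between reals, which is $\Pi^0_1$, within a bounded search. Arithmetical closure is preserved only if the search over indices is effectively bounded. If it is not, we would fall back on taking $\mathcal{S}$ to be the $\omega$-model generated by $\exists^{2}$ and $Y$, built via a countable elementary-substructure/Löwenheim–Skolem argument. We would then argue directly that $\mathcal{S}$ omits the $\omega$-jump or some hyperarithmetic set, so that arithmetical transfinite recursion along a recursive well-order fails, using that $Y$-answers are coded by natural numbers and contribute no $\Pi^1_1$ information. Finally, since $\textup{ARITH}$ does not contain $0^{(\omega)}$, $\ATR_{0}$ fails in $\mathcal{M}$.
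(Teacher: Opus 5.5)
There is a genuine gap, and it sits in your step (iv). The claim that ``$\exists^{2}$-computations over arithmetical inputs with arithmetical oracles stay arithmetical'' is false for Kleene's S1--S9. By Kleene's theorem, the type-1 functions S1--S9-computable in $\exists^{2}$ are exactly the hyperarithmetical ones. Concretely, the recursion theorem (S9) lets you define $\{e\}(n,m,\exists^{2})$ as the characteristic function of $0^{(n)}$ at $m$. The step $n\mapsto n+1$ applies $\exists^{2}$ to a function computable in $0^{(n)}$. Every query to $\exists^{2}$ is on an arithmetical function, so the computation runs inside your structure over \textup{ARITH}, yet $\lambda\langle n,m\rangle.\{e\}(n,m,\exists^{2})$ is $0^{(\omega)}$. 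Hence no type structure closed under S1--S9 in $\exists^{2}$ has \textup{ARITH} as its type-1 part. The same happens with G\"odel's T once type-1 recursors are present; only a calculus restricted to type-0 recursion, as in $\RCAo$, has a chance of staying arithmetical.

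Your fallback fails for the same reason: the $\omega$-model generated by $\exists^{2}$ under S1--S9 contains every hyperarithmetical set, so it cannot omit one. Independently, you never show that adding $Y$ creates no new type-1 objects. Your $Y$ comes from an external enumeration of $\mathcal{S}$ via an unbounded least-index search, so functions like $m\mapsto Y(x_{m})$, for uniformly defined families $(x_{m})$, need not lie in $\mathcal{S}$. Once new reals appear, $Y$ is no longer an injection on the model's reals, and this circularity is left unresolved.

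The missing idea is to make the injection computable from $\exists^{2}$ itself, so that adding it changes nothing. The paper takes $\mathbf{P}_{0}$ to be all finite-type objects S1--S9-computable in $\exists^{2}$. This is a model of $\ACAo$ whose type-1 part is $\textsf{HYP}$. It then applies Gandy selection to the true statement that every $f\in 2^{\N}\cap\mathbf{P}_{0}$ has some $e$ with $\{e\}(\exists^{2})=f$. This yields $Y$ S1--S9-computable in $\exists^{2}$, hence $Y\in\mathbf{P}_{0}$, with $\{Y(f)\}(\exists^{2})=f$ on the model's Cantor space. So $Y$ is automatically injective, giving $\neg\NIN_{2^{\N}}$ and hence $\neg\NIN_{[0,1]}$. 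Since $\ATR_{0}$ fails in $\textsf{HYP}$ (Simpson, V.2.6), the second-order part you should aim for is $\textsf{HYP}$, not \textup{ARITH}, and then no separate closure argument for $Y$ is needed.
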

\begin{proof}
We sketch the construction of a model $\mathbf{P}_{0}$ of $\ACAo+\neg\NIN_{2^{\N}}$, which is a modification of the model $\bf P$ of $\Z_{2}^{\omega}+\QFAC^{0,1}+\neg\NIN_{2^{\N}}$ from \cite{dagsamX}. 
We note that $\NIN_{[0,1]}\asa \NIN_{2^{\N}}$ is readily proved (see \cite{samcie22} for a proof). 

\smallskip

Now, Kleene's S1-S9 schemes (\cites{kleeneS1S9, longmann}) provide a model of computation that is intended to capture the following: 
\begin{center}
\emph{an object $X$ is computable from an object $Y$, for $X, Y$ objects of finite type.}
\end{center}
One uses the notation `$\{e\}(Y)=X$' to express that $e\in \N$ encodes a program for computing $X$ from $Y$ via S1-S9.  
The model $\mathbf{P}_{0}$ is then defined as all objects of finite type that are computable via S1-S9 in Kleene's quantifier $\exists^{2}$ from the axiom $(\exists^{2})$.  
Then $\mathbf{P}_{0}$ is a model of $\ACAo$, which is readily proved.   

\smallskip

Now, an important property of S1-S9 is \emph{Gandy selection} (\cite{longmann}), which intuitively expresses that S1-S9-computability is `closed under the Axiom of Choice'.  
To formalise this intuition, consider the following instance of the Axiom of Choice:
\[
(\forall x\in \Gamma)(\exists y\in \Xi)(Z(x, y)=0)\di (\exists \beta)(\forall x\in \Gamma)(\beta(x)\in \Xi \wedge Z(x, \beta(x))=0).
\]
Selection theorems, like Gandy selection, essentially express the following:
\begin{center}
\textbf{If} $\Gamma$ and $\Xi$ consist of S1-S9-computable functions and $Z$ is S1-S9-computable, \textbf{then} the choice function $\beta$ is also S1-S9-computable.
\end{center}
There are of course serious restrictions on the types and other parameters, but the general idea is captured by the above.   As usual in recursion theory, oracles are allowed: 
we can replace `S1-S9-computable' by `S1-S9-computable in $W$', where $W$ is a fixed object of finite type, again with certain restrictions.    

\smallskip

Now consider the following true-by-definition statement about $\bf P_{0}$:
\be\label{chrit}
(\forall f\in 2^{\N}\cap \mathbf{P}_{0})(\exists e\in \N)( \{e\}(\exists^{2})=_{1}f\} ).
\ee
Indeed, \eqref{chrit} just expresses that each element in the Cantor space of $\mathbf{P}_{0}$ is computable (S1-S9) in $\exists^{2}$.   
We may assume that $e\in \N$ is the unique least such number in \eqref{chrit}.  Now apply Gandy selection to \eqref{chrit} to obtain $Y:2^{\N}\di \N$ that is S1-S9-computable in $\exists^{2}$ and satisfies 
\be\label{chrit2}
(\forall f\in 2^{\N}\cap \mathbf{P}_{0})( \{Y(f)\}(\exists^{2})=_{1}f\} ).
\ee
By definition, $Y$ is an element of $\mathbf{P}_{0}$ and is an injection from the Cantor space of $\mathbf{P}_{0}$ to $\N$, as required for $\neg\NIN_{2^{\N}}$.  
We also recall the well-known fact that all reals computable in $\exists^{2}$ are hyperarithmetical (\cite{longmann}). 
However, $\ATR_{0}$ fails in $\textsf{HYP}$, the model of all hyperarithmetical reals (\cite{simpson2}*{V.2.6}).  Hence, $\mathbf{P}_{0}$ satisfies $\ACAo+\neg\NIN_{[0,1]}+\neg\ATR_{0}$, as required for the theorem.   
\end{proof}

\smallskip

Secondly, to be absolutely clear, we define the aforementioned theorems where we stress that `countable' means `injection to $\N$', as always in this paper.  
\begin{princ}[$\CBT$, Cantor-Bendixson theorem]
A closed $C\subset \R$ is the disjoint union of a perfect closed set $P$ and a countable set $S$.  
\end{princ}
\begin{princ}[$\PST$, perfect set theorem]
For any uncountable closed set $C\subset \R$, there exists a perfect set $P$ which is a subset of $ C$.
\end{princ}
We now have the following theorem, to be compared to the fact that second-order $\CBT$ and $\PST$ are respectively equivalent to $\FIVE$ and $\ATR_{0}$ (\cite{simpson2}).  Intuitively, 
these results suggest that $\CBT$ and $\PST$ are weak, both from a second- and third-order point of view.  
\begin{thm}\label{quazulu2} ~
\begin{itemize}
\item The system $\Z_{2}^{\omega}+\QFAC^{0,1}$ plus $\CBT$ and $\PST$ does not imply $\NIN_{[0,1]}$.
\item The system $\ACAo$ plus $\CBT$ and $\PST$ cannot prove $\ATR_{0}$. 
\end{itemize}
\end{thm}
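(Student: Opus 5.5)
The plan is to show that, in any model where $\neg\NIN_{[0,1]}$ holds, $\CBT$ and $\PST$ become trivially true. Both items then reduce to models already available. For the first item, use the model $\mathbf{P}$ of $\Z_{2}^{\omega}+\QFAC^{0,1}+\neg\NIN_{[0,1]}$ from \cite{dagsamX}, as mentioned in the proof of Theorem \ref{quazulu}. For the second item, use the model $\mathbf{P}_{0}$ of $\ACAo+\neg\NIN_{[0,1]}+\neg\ATR_{0}$ constructed in the proof of Theorem \ref{quazulu}. It then suffices to prove in $\ACAo$ (or $\RCAo$) that $\neg\NIN_{[0,1]}$ implies both $\CBT$ and $\PST$.

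Step one: from an injection $Y:[0,1]\di\N$, obtain an injection $Z:\R\di\N$. For $x\in\R$, write $x=k+r$ with $k=\lfloor x\rfloor\in\mathbb{Z}$ and $r\in[0,1)$, which is definable using $(\exists^{2})$; the latter follows from $\neg\NIN_{[0,1]}$ as noted in Mummert's footnote. Then put $Z(x):=\pi(k',Y(r))$, where $k'$ codes the integer $k$ and $\pi$ is the pairing function. Injectivity is immediate. Consequently every set $S\subset\R$ is countable in the sense of Definition \ref{char}, witnessed by $Z$, and no set of reals is uncountable.

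Step two: derive the two principles. $\PST$ holds vacuously, since no closed set is uncountable. For $\CBT$, given closed $C$, take $P=\emptyset$ and $S=C$. The empty set is closed and perfect: it is trivially closed, it has no isolated points, and it equals the set of its limit points. The sets are disjoint, their union is $C$, and $S$ is countable via $Z$. The only care needed is that $P=\emptyset$ meets the definition of `perfect' used here, and that the representation function for $\emptyset$ (constant $0$) exists, which it does trivially. Hence $\mathbf{P}\models\CBT+\PST+\neg\NIN_{[0,1]}$, which gives the first item, and $\mathbf{P}_{0}\models\ACAo+\CBT+\PST+\neg\ATR_{0}$, which gives the second.

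The main obstacle is not the logic above but confirming that the cited models really satisfy the required base theories. For $\mathbf{P}$ this means $\Z_{2}^{\omega}+\QFAC^{0,1}$, which is taken from \cite{dagsamX}. For $\mathbf{P}_{0}$, the proof of Theorem \ref{quazulu} already gives $\ACAo+\neg\NIN_{[0,1]}+\neg\ATR_{0}$. I would also check that the transfer from an injection on $2^{\N}$ (as produced by Gandy selection) to an injection on $[0,1]$ and then on $\R$ goes through inside these models, using the equivalence $\NIN_{[0,1]}\asa\NIN_{2^{\N}}$ from \cite{samcie22}.
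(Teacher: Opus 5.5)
Your proposal is correct and follows the paper's proof. The paper likewise observes that $\neg\NIN_{[0,1]}$ trivially yields $\CBT$ and $\PST$, then uses the model $\mathbf{P}$ from \cite{dagsamX} for the first item and Theorem \ref{quazulu} (i.e.\ $\mathbf{P}_{0}$) for the second; your extension of the injection from $[0,1]$ to $\R$ and your choice $P=\emptyset$, $S=C$ just spell out what the paper calls `vacuous'.
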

\begin{proof}
For the second item, use Theorem \ref{quazulu} and note that $\neg\NIN_{[0,1]}$ vacuously implies $\CBT$ and $\PST$. 
For the first item, note that $\CBT$ and $\PST$ hold in the model $\bf{P}$ from \cite{dagsamX} as the latter satisfies $\Z_{2}^{\omega}+\QFAC^{0,1}+\neg\NIN_{[0,1]}$. 
\end{proof}
As an informal corollary, we obtain that most theorems about uncountable sets cannot prove $\ATR_{0}$ unless enumerations are somehow included.  This is in stark contrast with the situation in second-order RM, 
where the second-order versions of $\CBT$ and $\PST$ are classified in $\ATR_{0}$ and beyond (\cite{simpson2}).  

\smallskip
\noindent
Finally, the above results do not constitute an isolated incident, as follows.  
\begin{rem}[On Borel and related sets]\label{korelborel}\rm
First of all, the \emph{Lusin separation theorem} states that the existence of certain Borel sets and is equivalent to $\ATR_{0}$ in second-order RM (\cite{simpson2}*{I.11.5}).  
Working in $\ACAo+\neg\NIN_{[0,1]}$, any arbitrary set $E\subset \R$ is Borel (even $\bf F_{\sigma}$) as we have $E=\cup_{n\in \N}E_{n}$ where $E_{n}=\{x\in E: Y(x)=n\}$ is closed and $Y:\R\di \N$ is an injection.  
Hence, the (third-order) Lusin separation theorem (formulated without second-order codes) is provable in the former system and therefore cannot imply $\ATR_{0}$ by Theorem \ref{quazulu}.  

\smallskip

Secondly, Silver's dichotomy theorem is classified in second-order RM as equivalent to $\FIVE$ (\cite{simpson2}*{I.9.4}).
A version of the former theorem is provable in $\ACAo+\neg \NIN_{[0,1]}$ if we use `uncountable' instead of `non-enumerable'.  
Hence, the third-order version of Silver's theorem cannot imply $\ATR_{0}$ by Theorem \ref{quazulu}.

\smallskip

Thirdly, for \emph{any} function $f:\R\di \R$ the discontinuity set $D_{f}$ is $\bf F_{\sigma}$.  This unusually general result goes back to Young (\cite{youngster2}) and some of its formulations cannot imply $\NIN_{[0,1]}$ or $\ATR_{0}$, by the above results.
Indeed, all sets or reals are $\bf F_{\sigma}$ in case $\neg \NIN_{[0,1]}$, which should be obvious by now.      

\smallskip

Fourth, a set is \emph{meagre} if it is the countable union of nowhere dense sets.  Working in $\ACAo+\neg\NIN_{[0,1]}$, any arbitrary set is clearly meagre.  Thus, any theorem about e.g.\ non-meagre sets should generally be provable in the former system and therefore not go beyond $\ATR_{0}$ by the above.  The same holds for sets with the Baire property and strongly measure zero sets (defined using meagreness as in \cite{GMS}).

\smallskip

Fifth, a set is measurable if it is the union of an $\bf F_{\sigma}$ and a measure zero set.  
Working in $\ACAo+\neg\NIN_{[0,1]}$, any arbitrary set is clearly measurable (as it is $\bf F_{\sigma}$).     
Thus, any theorem about e.g.\ non-measurable sets should generally be provable in the former system and therefore not go beyond $\ATR_{0}$ by the above. 
\end{rem}
In conclusion, from the point of view of higher-order arithmetic, theorems about uncountable, non-meagre, or non-measurable sets do not imply $\ATR_{0}$, following Theorem~\ref{quazulu} and Remark \ref{korelborel}. 
Nonetheless, these classes of infinite sets are generally viewed as `exotic', `logically hard', or `out there'.    


\section{Foundational aspects}\label{fasp}
\subsection{Boundaries, then and now}\label{fasp1}
We draw a parallel between the recent history of analysis and the coding of open sets in second-order RM.  

\smallskip

First of all, our starting point is the remarkable Theorem \ref{talkingaboud}.  The latter states that for a closed set $C\subset \R$, we have $C=\textsf{int}(C)\cup \partial C$, i.e.\ a closed set 
can be expressed as the disjoint union of its interior and its boundary.  \emph{Moreover}, we can define the latter sets in $\ACAo$ \emph{and} the interior comes with an RM-code.  
Hence, general closed sets are merely second-order RM-codes (their interior) together with a `small' third-order set (their boundary).  Note that `small' refers to category, as `fat' Cantor sets have non-zero measure but empty interior.  
In particular, Cantor sets are examples of sets that coincide with their boundary. 

\smallskip

Secondly, Hankel makes an incorrect claim in \cite{hankelwoot}*{p.\ 31}, which can be formulated -using our modern definitions- as follows:
\begin{center}
\emph{a bounded function with a dense set of continuity points, is Riemann integrable.  }
\end{center}
A purported proof of the centred statement is based on the (similarly) incorrect claim that nowhere dense sets must have measure zero (\cite{hankelwoot}*{p.\ 26}). 
Smith points out these mistakes in \cite{snutg} and introduces a set $Q\subset \R$ of positive measure that is nowhere dense, i.e.\ a fat Cantor set \emph{avant la lettre}, and shows that any function with discontinuity set $Q$ is not Riemann integrable.  

\smallskip

In conclusion, comparing the previous two paragraphs, we can say the following: since a closed set $C$ is the union of an RM-open set and its boundary $\partial C$, 
studying closed sets via second-order codes ignores the role of (arbitrary) nowhere dense sets.  Ironically, the very same mistake lies at the core of Hankel's incorrect claims and Smith's discovery of the first (fat) Cantor set.  
Put another way, the second-order coding of open sets is reminiscent of pre-Smithian (or: pre-Cantorian) mathematics without arbitrary Cantor sets. 

\subsection{Robustness and coding}\label{fasp2}
We discuss the foundational implications of our results for the coding practice of RM.
We are in good company as Friedman-Simpson discuss this coding practice at length for second-order RM in \cite{fried5}.  
We summarise their views in Section \ref{summac} and provide a critical discussion in Section~\ref{summad}.

\subsubsection{The coding practice and issue of RM}\label{summac}
First of all, RM studies mathematics `as it stands' as opposed to other foundational programs that employ significant modifications.  
The following quotes from \cite{simpson2}*{p.\ 32 and 137} make this precise.  
\begin{quote}
The typical constructivist response to a nonconstructive mathematical theorem is to modify the theorem by adding hypotheses or ``extra data''.
In contrast, our approach in this book is to analyze the provability of mathematical theorems as they stand [\dots]
\end{quote}
\begin{quote}
[\dots] we seek to draw out the set existence
assumptions which are implicit in the ordinary mathematical theorems \emph{as they stand}.
\end{quote}
Variations on this claim are found in \cite{damurm}*{\S10.5.2} and throughout the RM-literature. 

\smallskip

Now, the language of second-order RM is rather frugal: higher-order notions like functions on the reals or open sets of reals are not available directly but need to be represented via second-order `codes'.  
The `coding practice of RM' refers to the whole of these representations.

\smallskip

Secondly, in light of Simpson's quotes, it is then a natural question whether the coding practice of RM changes the (logical strength of the) theorems studied in RM in any significant way.  
This is not a fringe topic in RM: a section titled `The Coding Issue' is devoted to the coding practice of RM in the  `issues and problems in RM' paper \cite{fried5}, 
from which we extract the following quote (\cite{fried5}*{p.\ 135}).
\begin{quote}
Most mathematics naturally lies within the realm of complete separable metric
spaces and continuous functions between them defined on open, closed, compact or
$G_{\delta}$ subsets. This is the central coding issue.
\begin{quote}
PROBLEM. Continuation of the previous problem: Show that 
[the] neighborhood condition coding of partial continuous functions
between complete separable metric spaces is ``optimal". (It amounts
to a coding of continuous functions on a $G_{\delta}$.) 
\end{quote}
We emphasize our view that the handling of the critical coding in $\RCA_{0}$ in [the monograph \cite{simpson2}]
is canonical, but we are asking for theorems supporting this view.  
\end{quote}
The previous quote should be viewed in light of the following\footnote{The authors state in \cite{fried5} that `But infinite sequences of Dedekind cuts are bad.', i.e.\ they are aware of the quote involving Richards.} observation about the representation of the reals in the early days of RM.   
\begin{quote}
Under the old definition [of real number in \cite{simpson3}], it would be consistent with $\RCA_{0}$ that there exists a sequence of real numbers $(x_{n})_{n\in \N}$ such that $(x_{n}+\pi)_{n\in \N}$ is not a sequence of real numbers. We thank Ian Richards for pointing out this defect of the old definition. Our new definition [of real number in \cite{earlybs}], given above, is adopted in order to remove this defect. All of the arguments and results of \cite{simpson3}
remain correct under the new definition. (\cite{earlybs}*{p.\ 129})
\end{quote}
In short, the early representation of `real number' from \cite{simpson3} was not suitable for the development of RM, highlighting the importance of the right choice of definition.  
Similar considerations exist for the definition of continuous function in constructive mathematics (see \cites{waaldijk, vandebrug}), i.e.\ this situation is not unique to RM. 

%
%
%
%

\smallskip

In conclusion, we hope we have convinced the reader that the issues surrounding the coding of higher-order objects are taken seriously in second-order RM.  

\subsubsection{Robustness and coding}\label{summad}
We discuss the foundational implications of our results, especially the robustness of the second-order coding of open sets. 

\smallskip

First of all, we quote the full passage by Montalb\'an on robustness. 
\begin{quote}
Even though we now know of many theorems that are not equivalent to any of the big five systems, we would still claim that the great majority of the theorems from classical mathematics are equivalent to one of the big five. This phenomenon is still quite striking. Though we have some sense of why this phenomenon occurs, we really do not have a clear explanation for it, let alone a strictly logical or mathematical reason for it. The way I view it, gaining a greater understanding of this phenomenon is currently one of the driving questions behind reverse mathematics.  To study the big five phenomenon, one distinction that I think is worth making is the one between
robust systems and non-robust systems. A system is \emph{robust} if it is equivalent to small perturbations
of itself. This is not a precise notion yet, but we can still recognize some robust systems. All the big
five systems are very robust.  (see \cite{montahue}*{p.\ 432})
\end{quote}
Secondly, in light of our above results, the coding of open sets in RM can only be called \emph{non-robust} as the $D$-representation constitutes only a small variation, but yields much stronger results. 
Indeed, the Urysohn lemma for $D$-closed sets yields $\ATR_{0}$ when combined with arithmetical comprehension as in $\ACAo$.  By contrast, $\ATR_{0}$ is \emph{much} stronger than
the Urysohn lemma formulated with second-order codes, which is provable in $\RCA_{0}$ by \cite{simpson2}*{II.7}.  Hence, the second-order representation of open sets hardly seems ``optimal'' in any reasonable sense.         

\smallskip

Thirdly, we stress that while the $D$-representation challenges the suitability of the second-order representation of open sets, our results do not change the observation that the Big Five are robust, an observation made in Montalb\'an's above quote.  Moreover, it may well be that there is no representation of open sets that makes everyone happy.  However, whatever representation one favours, their properties do seem to yield equivalences for the Big Five, as discussed in Remarks \ref{tiritomba} and \ref{beyatr}.       

\begin{ack}\rm
We thank Carl Mummert and Jeff Hirst for the interesting discussion related to Section \ref{mummershines}.  
The ideas for this paper, namely studying variations of representations, is inspired by \cite{pischkeetal}. 
\end{ack}

\end{document}